\documentclass{amsart}
\usepackage{amsfonts}

\setcounter{MaxMatrixCols}{10}

\newtheorem{theorem}{Theorem}
\theoremstyle{plain}

\newtheorem{lemma}{Lemma}

\newtheorem{problem}{Problem}
\newtheorem{proposition}{Proposition}
\newtheorem{remark}{Remark}

\numberwithin{equation}{section}
\input{tcilatex}

\begin{document}
\title[L\'{e}vy processes]{L\'{e}vy processes, martingales, reversed
martingales and orthogonal polynomials}
\author{Pawe\l\ J. Szab\l owski}
\address{Department of Mathematics and Information Sciences,\\
Warsaw University of Technology\\
ul Koszykowa 75, 00-662 Warsaw, Poland}
\email{ pawel.szablowski@gmail.com}
\date{November, 2012}
\subjclass[2000]{Primary 60G51 60G44; Secondary11B68}
\keywords{L\'{e}vy processes, polynomial martingales, orthogonal
polynomials, reversed martingales, harnesses, tangent numbers.}

\begin{abstract}
We study class of L\'{e}vy processes having distributions being
indentifiable by moments. We define system of polynomial martingales \newline
$\left\{ M_{n}(X_{t},t),\mathcal{F}_{\leq t}\right\} _{n\geq 1},$ where $%
\mathcal{F}_{\leq t}$ is a suitable filtration defined below. We present
several properties of these martingales. Among others we show that $%
M_{1}(X_{t},t)/t$ is a reversed martingale as well as a harness. Main
results of the paper concern the question if martingale say $M_{i}$
multiplied by suitable determinstic function $\mu _{i}(t)$ is a reversed
martingale. We show that for $n\geq 3$ $M_{n}(X_{t},t)$ is a reversed
martingale (or orthogonal polynomial) only when the L\'{e}vy process in
question is Gaussian (i.e. is a Wiener process). We study also a more
general question if there are chances for a linear combination (with
coefficients depending on $t)$ of martingales $M_{i},$ $i\allowbreak
=\allowbreak 1,\ldots ,n$ to be reversed martingales. We analyze case $%
n\allowbreak =\allowbreak 2$ in detail listing all possible cases.
\end{abstract}

\maketitle

\section{Introduction}

Let us recall that L\'{e}vy processes $\left\{ X_{t}\right\} _{t\geq 0}$ are
such stochastic processes that start from zero i.e. $X_{0}\allowbreak
=\allowbreak 0$ a.s. and have stationary and independent increments which
means that distribution of $X_{t}-X_{s}$ is the same as that of $X_{t-s}$
for all $0\leq s\leq t$ and $X_{t}-X_{s}$ is independent on $X_{u}-X_{v}$
whenever $0\leq s<t\leq v<u.$

This paper deals with those L\'{e}vy processes that posses all moments, more
precisely we assume that the distributions of $X_{t},$ $t\geq 0$ are
identifiable by their moments. Among other advantages this assumption allows
to define a family of polynomial functions constructed of observations of
the process. We examine such properties of these polynomials as being a
martingale, a reversed martingale or a harness. The martingale theory is a
very developed method of analysis of stochastic processes hence indicating
martingales that can be constructed from the L\'{e}vy process we broaden the
spectrum of tools that are at hand in analysis of a given L\'{e}vy process.

One can define many families of polynomials for L\'{e}vy processes with
existing all moments. The most popular ones are the Kaillath--Segall
polynomials (see \cite{Segal76}, \cite{Lin81}, \cite{Sol08}, , \cite{Yabl08}%
) connected with a path' structure of the process and the properties of the
multiple integrals of the process. There are also so called Teugels
polynomials (see \cite{SchTeu98}, \cite{Sch2000}) associated with the
properties of the L\'{e}vy measure of the process.

As stated above we are seeking such polynomial functions $M_{n}(X_{t},t)$ of
the process's observations $X_{t}$ at $t$ that are martingales. We indicate
conditions under which these polynomials multiplied by some deterministic
functions of the time parameter or their linear combinations with depending
on $t$ coefficients are the reversed martingales or constitute a family of
orthogonal, polynomial martingales. We give some properties of the so called
'connection coefficients' between polynomial martingales and orthogonal
polynomials of the marginal distribution.

We also analyze the structure of the so called 'angular brackets' of the
martingales $M_{n}$ i.e. functions $p_{n}(t)\allowbreak =\allowbreak
EM_{n}(X_{t},t)^{2}.$

Of course there exist relations of our martingales with Kaillath--Segall
polynomials (see \cite{Segal76}) or Yablonski's polynomials (see \cite%
{Yabl08}). In 2011 during a seminar presentations in Innsbruck J.L. Sol\'{e}
constructed polynomial martingales using Bell's (or Yablonski's)
polynomials. This was based on two papers \cite{SolU08} and \cite{SolUz08t}.
We present many more properties of these martingales than it was mentioned
in Sol\'{e}'s and Utzet papers and presentation. They include expansion of
some products of these martingales in linear combinations of them. Those
useful technical results are presented in Lemma \ref{pom2}. We study also
relation of polynomial martingales $M_{n}$ to the system of orthogonal
polynomials of the marginal distributions. Some results in this topic are
presented in Proposition \ref{ort}. Of course on the way we point out
relationship with Yablonski's polynomials.

The paper is organized as follows. The next Section \ref{main} contains our
main results. It is divided into two subsections. Subsection \ref{gen}
contains properties of the family of polynomial martingales $\left\{
M_{n}\right\} $ while Subsection \ref{rev} our main results answering
questions if  polynomial martingales $\left\{ M_{n}\right\} $ are harnesses
or have reversed martingale property (respectively Theorems \ref{first} and %
\ref{gt3}). We consider also question when linear combinations of
martingales $\left\{ M_{n}\right\} $ are reversed martingales  (Theorem \ref%
{glowne}) as well as we study the relationship between polynomials
orthogonal with respect to the marginal distributions and polynomial
martingales $\left\{ M_{n}\right\} .$ Section \ref{open} contains some open
problems that can be solved using technic presented in the paper and which
we leave to more talented researchers. Finally Section \ref{dowody} contains
some technical, auxiliary results as well as longer, tedious proofs.

At last let us mention the fact that while analyzing consequences of the
assumption that $\mu (t)M_{2}(X_{t},t)$ is the reversed martingale we had to
prove, believed to be new, interesting property of the so called tangent
numbers (see (\ref{TnaT})), numbers closely related to Bernoulli numbers.

\section{Polynomial martingales\label{main}}

Let us formulate assumptions that will be in force throughout the paper.

On the probability space $(\Omega ,\mathcal{F},P)$ there is defined a L\'{e}%
vy stochastic process $\mathbf{X\allowbreak =\allowbreak (}X_{t})_{t\geq 0,}$
i.e. time homogeneous process with independent increments, continuous with
probability.

We define filtrations $\mathcal{F}_{\leq s}\allowbreak =\allowbreak \sigma
(X_{u}:u\leq s)$ for $s>0,$ $\mathcal{F}_{\geq s}\allowbreak =\allowbreak
\sigma (X_{u}:u\geq s)$ and $\mathcal{F}_{s,u}\allowbreak =\allowbreak
\sigma (X_{t}:t\notin (s,u)).$

We want to stress that all equalities between random variables are
understood to be with probability $1.$ Hence we drop abbreviation a.s.
usually following equality between random variables for the clarity of
exposition.

We will be interested only in those L\'{e}vy processes which posses all
moments. Such processes constitute a subclass of the class of all L\'{e}vy
processes and the main tool of analyzing them are the moment functions.
Hence we will not refer to the L\'{e}vy measure which is traditionally used
in the analysis of L\'{e}vy processes. Instead we will use Kolmogorov's
characterization of the infinitely divisible distributions as presented e.g.
in \cite{GnKol49} to study our class of L\'{e}vy processes. Of course the
two approaches are closely related since one can get all moment functions of
the process knowing its characteristic function. We will use moment
functions since they constitute a very natural tool of examining the
analyzed class of processes, for the sake of completeness of the paper and
also in order to illustrate the usage of the recently obtained results of
the paper \cite{SzablPoly}.

Let us denote by $m_{n}(t)$ the $n-$th moment of the process i.e. $%
m_{n}(t)\allowbreak =\allowbreak EX_{t}^{n}.$ We will assume that for all $%
n\geq 0$ functions $m_{n}(t)$ exist and are well defined.

Let us recall that a sequence $\left\{ \alpha _{n}\right\} _{n\geq 0}$ of
real numbers is called \emph{a moment sequence} iff every $(n+1)\times (n+1)-
$ matrix defined by $\left[ \alpha _{i+j}\right] _{0\leq i,j\leq n}$ is
positive definite. It is known that then exists a positive measure $d\beta $
such that $\alpha _{n}\allowbreak =\allowbreak \int x^{n}d\beta (x).$ Let us
also recall that not every moment sequence defines uniquely the measure
whose moments the elements of this sequence constitute. In order that this
measure be uniquely defined certain restrictions on he moment sequence have
to be imposed. The most popular one is the Carleman's condition stating that
if 
\begin{equation}
\sum_{n\geq 0}\frac{1}{\alpha _{2n}^{1/2n}}=\infty ,  \label{carl}
\end{equation}%
then the moment sequence $\left\{ \alpha _{n}\right\} _{n\geq 0}$ defines
its measure uniquely. Another criterion is that 
\begin{equation*}
\int \exp \left( y\left\vert x\right\vert \right) d\beta \left( x\right)
<\infty ,
\end{equation*}%
for some $y>0.$

In the sequel we will assume that $\forall t\geq 0$ sequence $\left\{
EX_{t}^{n}\right\} _{n\geq 0}$ defines marginal measure uniquely. For the
compact introduction see e.g. first sections of \cite{Sim98}. The discussion
of how assumptions we are making in order to assure the existence of
characteristic function of moments and the above mentioned assumptions
assuring identifiability of distribution by its moments is done in Remark %
\ref{kolmog}.

\subsection{General properties\label{gen}}

We have the following set of easy observations some of which are known. We
present them here for the completeness of the paper.

\begin{proposition}
\label{momenty}i) 
\begin{equation}
m_{n}(s+t)\allowbreak =\allowbreak \sum_{j=0}^{n}\binom{n}{j}%
m_{j}(s)m_{n-j}(t),  \label{mom}
\end{equation}%
for all $n\geq 0$ and $s,t\geq 0.$

ii) Let $Q(t;x)\allowbreak =\allowbreak \sum_{j\geq
0}m_{j}(t)x^{j}/j!\allowbreak $ be the characteristic function of the moment
functions, then 
\begin{equation*}
Q(t;x)\allowbreak =\allowbreak \exp (tf(x)),
\end{equation*}%
with $f(x)\allowbreak =\allowbreak \sum_{k\geq 1}c_{k}x^{k}/k!$.
Coefficients $c_{i},$ $i\allowbreak =\allowbreak 1,\ldots $ are such that
for every $t\geq 0$ the sequence $\left\{ m_{n}(t)\right\} _{n\geq 0}$ is
the moment sequence.

iii) $c_{1}t\allowbreak =\allowbreak EX_{t},$ $\limfunc{var}%
(X_{t})\allowbreak =\allowbreak c_{2}t.$ Let $\hat{m}_{n}(t)\allowbreak
=\allowbreak E(X_{t}-c_{1}t)^{n}$ be the central moment sequence. Then $%
\sum_{j\geq 0}\hat{m}_{n}(t)x^{j}/j!\allowbreak =\allowbreak \exp
(t(f(x)-c_{1}x)).$

iv) Moment functions $m_{n}(t)$ satisfy the following set of differential
equations: $\forall n>0,$ $t>0$ 
\begin{equation}
m_{n}^{^{\prime }}(t)=\sum_{j=1}^{n}\binom{n}{j}c_{j}m_{n-j}(t).
\label{m_prim}
\end{equation}

v) $\sum_{j=0}^{n}\binom{n}{j}m_{n-j}(-s)m_{j+i}(s)\allowbreak =\allowbreak
\left. \frac{\partial ^{n}}{\partial u^{n}}(\exp (-sf(u))\frac{\partial ^{i}%
}{\partial u^{i}}\exp (sf(u)))\right\vert _{u=0}$
\end{proposition}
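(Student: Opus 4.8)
The plan is to derive the identity directly from parts (i) and (ii), reading both sides as coefficients in a formal power series identity. First I would observe that the left-hand side is, up to the factor $1/n!$ already absorbed, exactly the $n$-th coefficient in the product of two power series: by part (i) (extended to negative argument via the $\exp(tf)$ formula of part (ii)) we have $m_n(s+t)\allowbreak =\allowbreak \sum_j \binom{n}{j} m_j(-s) m_{n-j}(s+\text{shift})$, so the combinatorial sum $\sum_{j=0}^n \binom{n}{j} m_{n-j}(-s) m_{j+i}(s)$ is recognized as the $n$-th derivative at $u=0$ of the product of the two functions $u\mapsto \sum_k m_k(-s)u^k/k!$ and $u\mapsto \sum_k m_{k+i}(s)u^k/k!$. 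The key point is a Leibniz-rule bookkeeping: $\frac{\partial^n}{\partial u^n}\bigl(A(u)B(u)\bigr)\big|_{u=0} = \sum_{j=0}^n \binom{n}{j} A^{(n-j)}(0) B^{(j)}(0)$, with $A^{(k)}(0)\allowbreak =\allowbreak m_k(-s)$ and $B^{(k)}(0)\allowbreak =\allowbreak m_{k+i}(s)$.

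Next I would identify the two generating functions explicitly. By part (ii), $A(u)\allowbreak =\allowbreak Q(-s;u)\allowbreak =\allowbreak \exp(-sf(u))$; this is where the extension of the moment functions to negative time is used, and it is legitimate because $Q(t;x)\allowbreak =\allowbreak \exp(tf(x))$ is defined as a formal power series in $x$ for every real $t$. For $B$, note that $B(u)\allowbreak =\allowbreak \sum_{k\geq 0} m_{k+i}(s) u^k/k!$ is obtained from $Q(s;u)\allowbreak =\allowbreak \sum_{k\geq 0} m_k(s)u^k/k!$ by differentiating $i$ times with respect to $u$ and discarding the factorial mismatch — more precisely $\frac{\partial^i}{\partial u^i} Q(s;u)\allowbreak =\allowbreak \sum_{k\geq 0} m_{k+i}(s) u^k/k!$, which is exactly $B(u)$. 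Hence $A(u)B(u)\allowbreak =\allowbreak \exp(-sf(u))\,\frac{\partial^i}{\partial u^i}\exp(sf(u))$, and taking $\frac{\partial^n}{\partial u^n}|_{u=0}$ of both sides, together with the Leibniz expansion above, yields the claimed formula.

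The only mildly delicate step is justifying that these manipulations of power series — in particular evaluating $Q$ at the negative time $-s$ and interchanging the order of the $i$-fold and $n$-fold differentiations — are valid. Since everything is happening at the level of formal power series in $u$ (the coefficients $m_k(\pm s)$ being finite by the standing moment assumption, and the series $\exp(tf(u))$ being well-defined formally for all $t$), there is no analytic convergence issue: coefficient extraction, the Leibniz rule, and termwise differentiation are all purely algebraic operations on $\mathbb{R}[[u]]$. I expect this to be the main obstacle only in the sense of writing it cleanly; the computation itself is routine once the generating-function dictionary from parts (i) and (ii) is in place.
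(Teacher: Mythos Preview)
Your proposal is correct and follows essentially the same approach as the paper: both identify $\exp(-sf(u))$ and $\frac{\partial^{i}}{\partial u^{i}}\exp(sf(u))$ as the two generating functions whose product, via the Leibniz rule (equivalently, the Cauchy product of exponential generating functions), yields the binomial sum on the left. The paper phrases it by summing over $n$ to obtain the full series identity and then reading off the $n$-th coefficient, whereas you work directly at the coefficient level, but the content is identical.
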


\begin{proof}
Is shifted to Section \ref{dowody}.
\end{proof}

\begin{remark}
Looking at assertion ii) and confronting it with the well known definition
of so called cumulants i.e. coefficients of the power series expansion of
the function $\log \int \exp (xy)d\beta (y)$ we see that coefficients $c_{i}$
are cumulants of the distribution of $X_{1}.$
\end{remark}

Let us remark that (\ref{mom}) is well known. We have recalled it here for
the sake of completeness.

\begin{remark}
\label{-t}Since $c_{0}\allowbreak =\allowbreak 0$ from the expansion $\exp
(tf\left( x)\right) \allowbreak =\allowbreak \sum_{n\geq
0}(tx)^{n}(f(x)/x)^{n}/n!$ we deduce that coefficient by $x^{n}$ is a
polynomial in $t$ of degree at most $n.$ Thus we can define moment functions
for non-positive $t$. Consequently (\ref{mom}) is true for all $t,s\in 
\mathbb{R}$.
\end{remark}

Hence as a corollary we have the following observation.

\begin{proposition}
\label{mart}i) Let us define for all $n$ and $t>0:$ 
\begin{equation}
M_{n}\left( x,t\right) \allowbreak =\allowbreak \sum_{j=0}^{n}\binom{n}{j}%
m_{n-j}(-t)x^{j}.  \label{martingales}
\end{equation}%
Then for all $n$ and $t>s>0:$ 
\begin{equation}
E(M_{n}(X_{t},t)|\mathcal{F}_{\leq s})\allowbreak =\allowbreak
M_{n}(X_{s},s).  \label{mart1}
\end{equation}

ii) Characteristic function of polynomials $\left\{ M_{n}\left( x,t\right)
\right\} $ is the following:%
\begin{equation*}
\sum_{n\geq 0}\frac{r^{n}}{n!}M_{n}(x,t)=\exp (rx-tf(r))\overset{df}{=}%
\mathcal{N}_{t}(x,r).
\end{equation*}

iii) We have for $t>s>0:$%
\begin{equation*}
E(\mathcal{N}_{t}(X_{t},r)|\mathcal{F}_{\leq s})=\mathcal{N}_{s}(X_{s},r),
\end{equation*}%
hence $(\mathcal{N}_{t}(X_{t},r),\mathcal{F}_{\leq t})$ is a martingale
known as 'exponential martingale'. Besides $E\mathcal{N}_{t}(X_{t},r)%
\allowbreak =\allowbreak 1.$
\end{proposition}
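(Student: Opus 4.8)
The plan is to prove the three assertions in Proposition \ref{mart} in order, exploiting the fact that $m_n(-t)$ has already been given meaning in Remark \ref{-t} and that (\ref{mom}) holds for all real arguments. For part i), the natural route is to reduce the martingale identity to the increment structure of the L\'{e}vy process. Fix $t>s>0$ and write $X_t = X_s + (X_t - X_s)$, where $X_t-X_s$ is independent of $\mathcal{F}_{\leq s}$ and distributed as $X_{t-s}$. Expanding $M_n(X_t,t)$ by the binomial theorem and substituting $X_t^j = (X_s + (X_t-X_s))^j = \sum_{k=0}^j \binom{j}{k} X_s^k (X_t-X_s)^{j-k}$, I would take conditional expectation and use independence to replace $E((X_t-X_s)^{j-k}\mid\mathcal{F}_{\leq s})$ by $m_{j-k}(t-s)$. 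Collecting the coefficient of $X_s^k$ then reduces (\ref{mart1}) to the identity
\begin{equation*}
\sum_{j=k}^{n}\binom{n}{j}\binom{j}{k} m_{n-j}(-t)\, m_{j-k}(t-s) = \binom{n}{k} m_{n-k}(-s),
\end{equation*}
which, after the standard manipulation $\binom{n}{j}\binom{j}{k} = \binom{n}{k}\binom{n-k}{j-k}$ and re-indexing $\ell = j-k$, is exactly (\ref{mom}) applied to $m_{n-k}((-t)+(t-s)) = m_{n-k}(-s)$, valid for real arguments by Remark \ref{-t}.

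For part ii), I would compute the generating function of the $M_n$ directly from their definition (\ref{martingales}):
\begin{equation*}
\sum_{n\geq 0}\frac{r^n}{n!}M_n(x,t) = \sum_{n\geq 0}\frac{r^n}{n!}\sum_{j=0}^{n}\binom{n}{j} m_{n-j}(-t) x^j,
\end{equation*}
which factors as a Cauchy product into $\left(\sum_{j\geq 0}\frac{(rx)^j}{j!}\right)\left(\sum_{i\geq 0}\frac{r^i}{i!}m_i(-t)\right) = e^{rx}\, Q(-t;r)$. By Proposition \ref{momenty}(ii) together with Remark \ref{-t} (which lets us evaluate $Q$ at negative time), $Q(-t;r) = \exp(-t f(r))$, giving $\mathcal{N}_t(x,r) = \exp(rx - t f(r))$ as claimed.

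Part iii) is then essentially a restatement: substituting $x = X_t$ and taking $E(\cdot\mid\mathcal{F}_{\leq s})$ term by term in the series $\mathcal{N}_t(X_t,r) = \sum_n \frac{r^n}{n!} M_n(X_t,t)$, part i) gives $E(\mathcal{N}_t(X_t,r)\mid\mathcal{F}_{\leq s}) = \sum_n \frac{r^n}{n!} M_n(X_s,s) = \mathcal{N}_s(X_s,r)$; alternatively one checks it directly from $\mathcal{N}_t(X_t,r) = e^{rX_t}e^{-tf(r)} = e^{rX_s}e^{-sf(r)}\cdot e^{r(X_t-X_s)}e^{-(t-s)f(r)}$, noting that $E(e^{r(X_t-X_s)}\mid\mathcal{F}_{\leq s}) = E e^{rX_{t-s}} = Q(t-s;r) = e^{(t-s)f(r)}$ by independence and stationarity. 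Taking $s\to 0$ (or $s=0$, where $X_0=0$) yields $E\mathcal{N}_t(X_t,r) = 1$. The only point requiring care is the interchange of conditional expectation with the infinite sum in iii) and the convergence of the moment generating series; this is where the standing assumption that the marginals have all moments and are determined by them does the work, and I would either invoke it to justify termwise operations on the series or, more cleanly, derive iii) from the closed form $e^{rX_t - tf(r)}$ directly and then read off i) as the coefficient identity — the latter order avoids the convergence issue entirely and is probably the cleanest presentation, though the excerpt's ordering suggests the authors prove i) first by the combinatorial identity above.
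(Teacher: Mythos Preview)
Your proof is correct and follows essentially the same approach as the paper: part i) is done by writing $X_t = X_s + (X_t - X_s)$, expanding binomially, using independence and stationarity of increments, and then invoking (\ref{mom}) via the identity $\binom{n}{j}\binom{j}{k}=\binom{n}{k}\binom{n-k}{j-k}$; part ii) is obtained from the definition (\ref{martingales}) together with Proposition \ref{momenty}(ii); and part iii) is deduced from (\ref{mart1}). Your added remarks on convergence and the alternative direct verification of iii) from the closed form of $\mathcal{N}_t$ are more careful than the paper's brief ``follows directly,'' but the core argument is the same.
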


\begin{proof}
We have $E(M_{n}(X_{t},t)|\mathcal{F}_{\leq s})\allowbreak \allowbreak
=\allowbreak \sum_{j=0}^{n}\binom{n}{j}m_{n-j}(-t)E(X_{t}-X_{s}+X_{s})^{j}%
\allowbreak =\allowbreak \sum_{j=0}^{n}\binom{n}{j}m_{n-j}(-t)\sum_{k=0}^{j}%
\binom{j}{k}X_{s}^{k}m_{k-j}(t-s)\allowbreak =\allowbreak \sum_{k=0}^{n}%
\binom{n}{k}X_{s}^{k}\sum_{j=k}^{n}\binom{n-k}{j-k}m_{k-j}(t-s)m_{n-j}(-t)%
\allowbreak =\allowbreak \sum_{k=0}^{n}\binom{n}{k}X_{s}^{k}m_{n-k}(-s)$ by (%
\ref{mom}).

ii) It easily follows from Proposition \ref{momenty}, ii) and (\ref%
{martingales}). iii) Follows directly from (\ref{mart1}).
\end{proof}

\begin{remark}
\label{other}Notice that polynomial martingales $\left\{ M_{n}\right\}
_{n\geq 0}$ are not the only polynomial martingales of the given L\'{e}vy
process. In fact family of polynomials defined by 
\begin{equation*}
\tilde{M}_{n}(X_{t},t)=\sum_{j=1}^{n}b_{n,j}M_{j}\left( X_{t},t\right) ,
\end{equation*}%
where coefficients $\left\{ b_{n,j}\right\} $ do not depend on $t,$
constitute another family of polynomial martingales.
\end{remark}

\begin{remark}
Assertion ii) of the above mentioned Proposition appeared earlier in \cite%
{SolU08}.
\end{remark}

\begin{remark}
\label{Kolmog}Coefficients $c_{i}$ can be identified with the moments of
Kolmogorov's measure $dK$ of the analyzed L\'{e}vy process. Recall that
since we deal with the process that has finite variance we can use the L\'{e}%
vy canonical form of the infinitely divisible distribution in the equivalent
(Kolmogorov's) form (see e.g. \cite{GnKol49}, p.93, (10)). Applying
appropriate formula for $t\allowbreak =\allowbreak -ix$ we get 
\begin{equation}
E\exp (xX_{t})\allowbreak =\allowbreak \exp (tf(x))\allowbreak =\allowbreak
\exp (tc_{1}x+t\int_{\mathbb{-\infty }}^{\infty }\frac{(\exp (xy)-1-xy)}{%
y^{2}}dK(y),  \label{kolmog}
\end{equation}%
where $K(y)$ is a non-decreasing function with bounded variation such that $%
K(-\infty )\allowbreak =\allowbreak 0$ and $K(\infty )\allowbreak
=\allowbreak \int_{\mathbb{R}}dK(y)\allowbreak =\allowbreak \limfunc{var}%
(X_{1})\allowbreak =\allowbreak c_{2}.$
\end{remark}

Following this remark we have the following Proposition exposing
relationship between coefficients $\left\{ c_{j}\right\} _{j\geq 1}$ and
moments of the measure $dK.$ 

\begin{proposition}
\label{dK}i) 
\begin{equation*}
c_{i}\allowbreak =\allowbreak \int_{\mathbb{R}}y^{i-2}dK(y),
\end{equation*}%
consequently  $c_{i}/c_{2}$ is the $i-2$ moment of the probability measure $%
\frac{1}{c_{2}}dK(y)$, for $i\geq 3.$

ii) $(c_{4}/c_{2})^{1/2}\allowbreak \leq \allowbreak
(c_{6}/c_{2})^{1/4}\allowbreak \leq \allowbreak \ldots \leq \allowbreak
(c_{2k+2}/c_{2})^{1/2k}\allowbreak \leq \ldots $

iii) $c_{4}\allowbreak -\allowbreak c_{3}^{2}/c_{2}\allowbreak \allowbreak
\geq \allowbreak 0$ since $c_{4}/c_{2}\allowbreak -\allowbreak
(c_{3}/c_{2})^{2}$ is the variance of the measure $\frac{1}{c_{2}}dK.$

iv) If $c_{2k}\allowbreak =\allowbreak 0$ for some $k\geq 2$ then $dK$ must
be degenerated and concentrated at $0$ (consequently $c_{i}\allowbreak
=\allowbreak 0$ for $i\geq 3$) so we deal with the Gaussian case since 
\newline
\begin{equation*}
\exp (c_{1}xt\allowbreak +t\allowbreak c_{2}x^{2}/2)\allowbreak =\allowbreak
\int \exp (xy)\frac{1}{\sqrt{2\pi c_{2}t}}\exp (-\frac{(y-xc_{1}t)^{2}}{%
2c_{2}t})dy.
\end{equation*}

v) If $c_{4}/c_{2}\allowbreak -\allowbreak (c_{3}/c_{2})^{2}\allowbreak
=\allowbreak 0$ then $dK$ is degenerated and concentrated at $\frac{c_{3}}{%
c_{2}},$ consequently $c_{i}\allowbreak =\allowbreak c_{3}^{i-2}/c_{2}^{i-3}$
for $i\geq 3.$  We deal in this case with a mixture of the modified Poisson
(i.e. concentrated at points $nc_{3}/c_{2},$ $n\geq 3$) and Gaussian
distributions. The mixture depends on the relationship between $c_{2}$ and $%
c_{3}.$
\end{proposition}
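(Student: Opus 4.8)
The plan is to prove all five assertions of Proposition \ref{dK} directly from the Kolmogorov representation (\ref{kolmog}). First I would expand both sides of (\ref{kolmog}) as power series in $x$. On the right-hand side we have $tc_1 x + t\int_{\mathbb R}\frac{\exp(xy)-1-xy}{y^2}\,dK(y)$; since $\frac{\exp(xy)-1-xy}{y^2} = \sum_{k\geq 2}\frac{x^k y^{k-2}}{k!}$, interchanging sum and integral (justified because $dK$ has bounded total variation $c_2$ and the moments $\int y^{i-2}\,dK(y)$ are finite under our standing moment assumptions) gives $tf(x) = tc_1 x + t\sum_{k\geq 2}\frac{x^k}{k!}\int_{\mathbb R} y^{k-2}\,dK(y)$. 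Comparing with $f(x) = \sum_{k\geq 1} c_k x^k/k!$ yields $c_1 = c_1$ and $c_i = \int_{\mathbb R} y^{i-2}\,dK(y)$ for $i\geq 2$; in particular $c_2 = \int_{\mathbb R} dK(y) = K(\infty)$, matching Remark \ref{Kolmog}. Dividing by $c_2$ exhibits $c_i/c_2$ as the $(i-2)$-nd moment of the probability measure $dK/c_2$, which is assertion i).

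For ii), with $d\mu := dK/c_2$ a probability measure, assertion i) says $c_{2k+2}/c_2 = \int y^{2k}\,d\mu(y) = \|Y\|_{2k}^{2k}$ where $Y$ is a random variable with law $d\mu$. The chain of inequalities $(c_4/c_2)^{1/2}\leq(c_6/c_2)^{1/4}\leq\cdots$ is then exactly the statement that $k\mapsto\|Y\|_{2k}$ is nondecreasing, which is the standard monotonicity of $L^p$-norms on a probability space (a direct consequence of Jensen's inequality applied to the convex function $u\mapsto u^{(k+1)/k}$, or of H\"older). Assertion iii) is immediate from i): $c_4/c_2 - (c_3/c_2)^2 = \int y^2\,d\mu - (\int y\,d\mu)^2 = \operatorname{var}_\mu(Y)\geq 0$, hence $c_4 - c_3^2/c_2\geq 0$ after multiplying by $c_2>0$.

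For iv), if $c_{2k}=0$ for some $k\geq 2$ then by i), $\int y^{2k-2}\,dK(y)=0$; since $dK\geq 0$ and the integrand is nonnegative, this forces $dK$ to be supported on $\{y: y^{2k-2}=0\}=\{0\}$, i.e. $dK = c_2\delta_0$. Then for every $i\geq 3$, $c_i = \int y^{i-2}\,dK(y)=0$, and $f(x) = c_1 x + c_2 x^2/2$, so $E\exp(xX_t) = \exp(c_1 x t + c_2 x^2 t/2)$, which is the displayed Gaussian Laplace transform — one simply completes the square in the exponent of the Gaussian density to verify the identity. For v), if $c_4/c_2 - (c_3/c_2)^2 = \operatorname{var}_\mu(Y)=0$ then $Y$ is $\mu$-a.s. equal to its mean $c_3/c_2$, so $dK = c_2\delta_{c_3/c_2}$; substituting into i) gives $c_i = \int y^{i-2}\,dK(y) = c_2\,(c_3/c_2)^{i-2} = c_3^{i-2}/c_2^{i-3}$ for $i\geq 3$. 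The resulting $f(x) = c_1 x + c_2\int\frac{\exp(xy)-1-xy}{y^2}\,\delta_{c_3/c_2}(y) = c_1 x + \frac{c_2^2}{c_3^2}\bigl(\exp(c_3 x/c_2) - 1 - c_3 x/c_2\bigr)$ is, after absorbing the linear terms, the exponent of a compound/scaled Poisson characteristic (jumps of size $c_3/c_2$, rate $c_2^2/c_3^2$) plus a Gaussian drift contribution, giving the claimed mixture description; the precise way the Gaussian and Poisson pieces combine depends on the sign and size of $c_3$ relative to $c_2$, which is the only mildly delicate bookkeeping here.

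The computations are all routine; the only point requiring a word of care is the term-by-term integration used to identify the $c_i$, which is where the standing assumption that all moments exist (and, via Remark \ref{kolmog}, that $dK$ has the finite moments $\int y^{i-2}\,dK$) is actually used. Everything else is elementary probability (monotonicity of $L^p$-norms, nonnegativity of variance, and the fact that a nonnegative measure integrating a strictly positive-away-from-a-point function to zero is concentrated at that point).
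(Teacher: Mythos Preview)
Your proof is correct and follows the same line as the paper's own argument, which is extremely terse: the paper simply says ``confront (\ref{kolmog}) with the definition of the coefficients $c_i$'' for i), ``we use Jensen's inequality'' for ii), calls iii) and iv) trivial, and for v) refers back to iii) and (\ref{kolmog}). You have supplied the details that the paper omits---the power-series comparison for i), the $L^p$-norm monotonicity for ii), the nonnegativity-of-variance and support arguments for iii)--v)---but there is no methodological difference.
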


\begin{proof}
i) We confront (\ref{kolmog}) with the definition of the coefficients $c_{i}.
$ii) We use Jensen's inequality. iii), iv), Are trivial. v) We confront
assertion iii) with (\ref{kolmog}). 
\end{proof}

\begin{remark}
\label{identif}Let us recall result from \cite{Sato99} stating that measure $%
K$ defines one dimensional marginal measures uniquely. Hence if Kolmogorov
measure $K$ is unidentifiable by moments then the same must be true with
marginal measures and conversely. Notice also that if measure $K$ is
identifiable by moments then expression $\int_{\mathbb{-\infty }}^{\infty }%
\frac{(\exp (xy)-1-xy)}{y^{2}}dK(y)$ is finite, consequently $\log (E\exp
(xX_{t}))$ is finite in some neighborhood of zero and we deal with the so
called 'small exponential moments' case, the situation often considered by
researchers working on L\'{e}vy processes.
\end{remark}

Following the above mentioned Remarks and interpretation of the coefficients 
$c_{i}$ we will assume from now on that these coefficients are such that the
Kolmogorov's measure $dK$ is determined by them completely.

Since coefficients $c_{i},$ $i\geq 1$ determine L\'{e}vy process with finite
all moments completely we will use notation $\mathbf{X(}\left\{
c_{i}\right\} ),$ $\mathbf{X(c),}$ or finally $\mathbf{X(\{}%
c_{1},c_{2},\ldots \})$ to denote L\'{e}vy process with parameters $%
\{c_{1},c_{2},\ldots \}.$

\begin{remark}
Taking into account interpretation and properties of the coefficients $c_{i}$
given above we can refer to the martingale characterizations given by Weso\l %
owski in \cite{Wes90P}. One of them is obviously wrong. Namely the
characterization of the Poisson process by the form of first three
polynomial martingales is not true. This is so since from the martingale
conditions considered by Weso\l owski in Theorem 1. of \cite{Wes90P} it
follows that $c_{1}\allowbreak =\allowbreak c_{2}\allowbreak =\allowbreak
c_{3}\allowbreak =\allowbreak 1.$ As the above Remark shows it is not enough
to impose that all $c_{i}\allowbreak =\allowbreak 1$ for $i\geq 4$ which
would lead to the Poisson process with parameter $1$ as indicated in Remark %
\ref{Kolmog}, iv).

On the other hand the second martingale characterization of the Wiener
process (within the class of L\'{e}vy processes) by the first four
polynomial martingales given by Theorem 3. of \cite{Wes90P} is true since
the form of these martingales impose that $c_{3}\allowbreak =\allowbreak
c_{4}\allowbreak =\allowbreak 0.$ As it can be seen from Remark \ref{Kolmog}%
, iii) it is enough to deduce that then all $c_{i}\allowbreak =\allowbreak 0$
for $i\geq 4.$
\end{remark}

\begin{remark}
In \cite{Yabl08}(2.1) Yablonski defined family of polynomials $%
P_{n}(x_{1},\ldots ,x_{n})$ of the increasing numbers of variables by the
expansion%
\begin{equation}
\exp (\sum_{k\geq 1}\frac{(-1)^{k-1}x_{k}}{k}z^{k})=\sum_{n\geq
0}z^{n}P_{n}(x_{1},\ldots ,x_{n}).  \label{Yab}
\end{equation}%
He proved validity of the above expansion for $\left\vert z\right\vert
<1/\lim \sup_{k\longrightarrow \infty }\left\vert x_{k}\right\vert ^{1/k}$
and also gave some properties of these polynomials. Comparing (\ref{Yab})
with Proposition \ref{momenty},ii) we see that 
\begin{eqnarray}
x_{k}\allowbreak &=&(-1)^{k}c_{k}/(k-1)!,  \label{x_c} \\
m_{n}(t) &=&n!P_{n}(c_{1}t,-c_{2}t,c_{3}t/2,\ldots ,(-1)^{n-1}tc_{n}/(n-1)!),
\label{mY}
\end{eqnarray}%
where $P_{n}$ is the mentioned above Yablonski's polynomial. In view of (\ref%
{x_c}) we see that the condition $\lim \sup_{k\longrightarrow \infty
}\left\vert x_{k}\right\vert ^{1/k}<\infty $ is equivalent to the following
one: $\lim \sup_{k\longrightarrow \infty }\left\vert c_{k}\right\vert
^{1/k}/k<\infty .$ However as Proposition 1.5 of \cite{Sim98} shows it can
happen that in the case of deterministic moment problem (i.e. when
coefficients $c_{k}$ fully determine distribution $dK)$ $\lim
\sup_{k\longrightarrow \infty }\left\vert c_{k}\right\vert ^{1/k}/k$ can be
finite or infinite. Hence existence of expansion (\ref{Yab}) has nothing to
do with determinacy of the L\'{e}vy process by its moments.

Following formulae (\cite{Yabl08},(2.2)--(2.4)) and using our notation given
by (\ref{mY}) we have the following properties of moments $m_{n}(t)$ which
we quote here for completeness of the paper:%
\begin{eqnarray}
m_{n+1}(t)\allowbreak &=&\allowbreak t\sum_{j=0}^{n}\binom{n}{j}%
c_{j+1}m_{n-j}(t),  \label{Y1} \\
\frac{\partial m_{n}(t)}{\partial c_{l}} &=&\left\{ 
\begin{array}{ccc}
0 & if & l>n \\ 
ntm_{n-l}(t) & if & l\leq n%
\end{array}%
\right. ,  \label{Y2} \\
m_{n}(t;\mathbf{c+d}) &=&\sum_{k=0}^{n}\binom{n}{k}m_{k}(t;\mathbf{c)}%
m_{n-k}(t;\mathbf{d),}  \label{Y3} \\
m_{n}(t;(c_{1}\alpha ,c_{2}\alpha ^{2},\ldots ))\allowbreak &=&\allowbreak
\alpha ^{n}m_{n}(t;(c_{1},c_{2},\ldots )).  \label{Y4}
\end{eqnarray}%
where we denoted $m_{n}(t;\mathbf{c)}$ $n-$th moment of the L\'{e}vy process
with parameters $\mathbf{c\allowbreak =\allowbreak (}c_{1},c_{2},\ldots ).$

Finally let us remark that as shown in \cite{Sol08} Yablonski's polynomials $%
P_{n}$ are closely related to the Kailath--Segall polynomials (see \cite%
{Segal76}) that are used to study the path properties of L\'{e}vy processes.
Hence our results give new interpretation of these polynomials.
\end{remark}

Using this formula and (\ref{Y1}) we have the following set of useful
relationships:

\begin{lemma}
\label{pom2}i) 
\begin{equation*}
M_{1}(x,t)M_{n}(x,t)\allowbreak =\allowbreak M_{n+1}(x,t)+t\sum_{k=1}^{n}%
\binom{n}{k}c_{k+1}M_{n-k}(x,t).
\end{equation*}
Thus in particular $EM_{1}(X_{t},t)M_{n}(X_{t},t)\allowbreak =\allowbreak
tc_{n+1}.$

ii) 
\begin{gather*}
M_{2}(x,t)M_{n}(x,t)\allowbreak =\allowbreak
M_{n+2}(x,t)+2nc_{2}tM_{n}(x,t)\allowbreak +\allowbreak t\sum_{k=2}^{n+1}(%
\binom{n}{k-1} \\
+2\binom{n}{k})c_{k+1}M_{n-k+1}(x,t)\allowbreak +\allowbreak
t^{2}\sum_{l=2}^{n}\binom{n}{l}M_{n-l}(x,t)\sum_{k=1}^{l-1}\binom{l}{k}%
c_{k+1}c_{l-k+1}.
\end{gather*}
In particular $EM_{2}(X_{t},t)M_{n}(X_{t},t)\allowbreak =\allowbreak
tc_{n+2}\allowbreak +\allowbreak t^{2}\sum_{k=1}^{n-1}\binom{n}{k}%
c_{k+1}c_{n+1-k}.$

iii) $\forall n,k\geq 0,t\geq 0:$ 
\begin{equation*}
E(M_{k}(X_{t},t)M_{n}(X_{t},t))\allowbreak =\allowbreak \allowbreak \left. 
\frac{\partial ^{n}\partial ^{k}}{\partial u^{n}\partial v^{k}}\exp
(t(f(u+v)-f(u)-f(v))\right\vert _{u=v=0},
\end{equation*}%
consequently 
\begin{equation*}
EM_{n}(X_{t},t)M_{k}(X_{t},t)\allowbreak =\allowbreak \sum_{j=1}^{\min
(k,n)}d_{j}^{(k,n)}t^{j},
\end{equation*}
with 
\begin{equation}
d_{j}^{(k,n)}\allowbreak =\left. \frac{d^{n+k-j}}{dx^{n+k-j}}\left(
h(x)\right) ^{j}\right\vert _{x=0},  \label{wsp}
\end{equation}%
where we denoted $h(x)=\sum_{k\geq 2}c_{k}x^{k-1}/(k-1)!\allowbreak
=\allowbreak f^{\prime }(x)-c_{1}.$ In particular coefficient by $t$ is
equal to $c_{n+k}$, by $t^{2}$ $\left. \frac{d^{n+k-2}}{dx^{n+k-j}}\left(
h(x)\right) ^{2}\right\vert _{x=0}$ and by $t^{\min (n,k)}$ is equal to $%
\left. \frac{d^{\max (n,k)}}{dx^{\max (n,k)}}\left( h(x)\right) ^{\min
(n,k)}\right\vert _{x=0}\allowbreak .$ If $n\allowbreak =\allowbreak k$
coefficient by $t^{k}\allowbreak $ is \allowbreak equal to $k!c_{2}^{k}>0,$ $%
.$
\end{lemma}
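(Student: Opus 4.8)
The statement is Lemma \ref{pom2}, which has three parts. For part i), the plan is to start from the product formula $M_1(x,t)M_n(x,t)$ and expand $M_{n+1}(x,t)$ using its defining sum \eqref{martingales}, then match coefficients of powers of $x$. The cleanest route is actually to use the generating function (exponential martingale) from Proposition \ref{mart}, ii): since $\mathcal{N}_t(x,r) = \exp(rx - tf(r))$, differentiating the product $\mathcal{N}_t(x,r)\mathcal{N}_t(x,s)$ or, more directly, using the recursion \eqref{Y1} for moments, one obtains the linearization. Concretely, I would write $M_1(x,t) = x - c_1 t$ (since $m_1(-t) = -c_1 t$, $m_0 = 1$), multiply into $M_n$, and use the differential relation \eqref{m_prim} together with \eqref{Y1} to identify the coefficients. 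Taking expectations kills all terms except the constant one; since $EM_{n-k}(X_t,t) = m_{n-k}(-t)\cdot[\text{top term}]$... actually more simply $EM_j(X_t,t) = \delta_{j,0}$ for the exponential-martingale normalization, which forces $EM_1 M_n = t c_{n+1}$ immediately from the linearization. Let me double check: $EM_n(X_t,t)$ for $n\ge 1$ — from $E\mathcal{N}_t(X_t,r)=1$ we get $EM_n(X_t,t)=0$ for $n\ge 1$ and $=1$ for $n=0$, so indeed $EM_1M_n = t\binom{n}{n}c_{n+1}M_0$ expectation $= tc_{n+1}$.

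**Part ii).** The plan is to iterate part i): write $M_2 M_n = M_1(M_1 M_n) - (\text{correction})$, or better, first establish $M_2(x,t) = M_1(x,t)^2 - c_2 t$ (which follows from part i) with $n=1$: $M_1^2 = M_2 + t c_2 M_0$), then compute $M_1(M_1 M_n)$ by applying part i) twice and collecting terms. This is a bookkeeping exercise: apply i) to $M_1 M_n$, then apply i) again to each $M_1 M_{n-k}$ term, then subtract $c_2 t M_n$. The binomial identity $\binom{n}{k-1} + 2\binom{n}{k}$ appearing in the statement is exactly what one gets from combining the "direct" term $\binom{n}{k}$ (hit twice, with a factor 2 from the two ways of distributing) with the cross term involving $\binom{n}{k-1}$; I would verify this by careful index shifting. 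Then take expectations: again only the $M_0$-coefficient survives, giving $t c_{n+2}$ from the single-sum part and $t^2\sum_k \binom{n}{k}c_{k+1}c_{n+1-k}$ from the double-sum part (the $2nc_2 t M_n$ term and the middle sum contribute nothing in expectation since $n\ge 1$).

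**Part iii).** Here I would use generating functions directly. From Proposition \ref{mart} ii), $\sum_{n,k} \frac{u^n v^k}{n! k!} E(M_n(X_t,t) M_k(X_t,t)) = E[\mathcal{N}_t(X_t,u)\mathcal{N}_t(X_t,v)] = E[\exp((u+v)X_t)]\exp(-tf(u)-tf(v)) = \exp(t f(u+v) - tf(u) - tf(v))$, using $E\exp(xX_t)=\exp(tf(x))$ from \eqref{kolmog}. This gives the first displayed formula in iii) by reading off the Taylor coefficients. For the polynomial-in-$t$ structure: expand $\exp(t\cdot g(u,v))$ where $g(u,v) = f(u+v)-f(u)-f(v)$, noting $g$ vanishes to second order at $u=v=0$ (the linear terms $c_1(u+v) - c_1 u - c_1 v$ cancel, and each monomial $u^a v^b$ in $g$ has $a,b\ge 1$). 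So the coefficient of $u^n v^k/(n!k!)$ in $\sum_j \frac{t^j}{j!} g(u,v)^j$ only receives contributions from $j \le \min(n,k)$ (since $g^j$ has each variable to power $\ge j$), which is exactly the stated truncation. Reading off $d_j^{(k,n)}$: the coefficient of $u^n v^k$ in $g(u,v)^j/j!$ — here I would observe that the "diagonal" extraction can be rephrased via $h(x) = f'(x) - c_1$; one relates $\partial_u^n \partial_v^k g(u,v)^j|_0$ to $\frac{d^{n+k-j}}{dx^{n+k-j}} h(x)^j|_0$ by a change of variables / Faà di Bruno argument exploiting that $g(u,v)$ near the origin behaves like $\int$... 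The cleanest justification: set $u=v$ won't directly work; instead use that the leading behavior in the scaling $u,v\to 0$ picks out exactly the top-degree part. The claims about specific coefficients ($t^1$ gives $c_{n+k}$, $t^{\min(n,k)}$ gives the stated derivative, and for $n=k$ the top coefficient is $k! c_2^k > 0$) all follow by specializing.

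**Main obstacle.** The routine linearization in i) and ii) is pure bookkeeping, so the real work is the clean derivation of formula \eqref{wsp} for $d_j^{(k,n)}$ in terms of $h(x) = f'(x)-c_1$ rather than in terms of $g(u,v)$ directly. The subtlety is justifying why the two-variable diagonal coefficient extraction $\left.\partial_u^n\partial_v^k g(u,v)^j\right|_0$ collapses to the one-variable derivative $\left.\frac{d^{n+k-j}}{dx^{n+k-j}} h(x)^j\right|_0$. I expect this comes from writing $g(u,v) = \int_0^v (f'(u+s) - f'(s))\,ds$ and then, after $j$-th power and differentiation, the $v$-derivatives integrate against the structure so that effectively $u$ and the integration variables merge; tracking that each of the $j$ factors contributes one unit of "lost" derivative (the $n+k-j$ rather than $n+k$) is the crux. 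Verifying the positivity $k!c_2^k>0$ in the equal-index top-coefficient case is then immediate since $h(x) = c_2 x + O(x^2)$, so $h(x)^k = c_2^k x^k + \cdots$ and $\frac{d^k}{dx^k} h(x)^k|_0 = k! c_2^k$ with $c_2 = \mathrm{var}(X_1) > 0$ in the non-degenerate case.
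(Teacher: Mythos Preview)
Your approach to i), ii), and the opening generating-function identity of iii) is essentially the paper's: the paper proves i) from the relation $M_{1}(x,t)\,\mathcal{N}_{t}(x,r)=\partial_{r}\mathcal{N}_{t}(x,r)+t(f'(r)-c_{1})\mathcal{N}_{t}(x,r)$ followed by Leibniz in $r$, then gets ii) by writing $M_{2}=M_{1}^{2}-c_{2}t$ and iterating i) (exactly your plan), and obtains the first display of iii) from $E[\mathcal{N}_{t}(X_{t},u)\mathcal{N}_{t}(X_{t},v)]=\exp\bigl(t(f(u+v)-f(u)-f(v))\bigr)$, just as you do.

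You are right that the passage from the two-variable coefficient $\partial_{u}^{n}\partial_{v}^{k}g(u,v)^{j}\big|_{0}$ (with $g(u,v)=f(u+v)-f(u)-f(v)$) to the one-variable expression \eqref{wsp} is the crux, and the integral-representation idea you sketch will not close it: the formula \eqref{wsp} is in fact false as written. From $g(u,v)=\sum_{a,b\geq 1}\tfrac{c_{a+b}}{a!\,b!}u^{a}v^{b}$ one computes directly that $d_{2}^{(3,3)}=\tfrac{1}{2!}\partial_{u}^{3}\partial_{v}^{3}g^{2}\big|_{0}=9(c_{2}c_{4}+c_{3}^{2})$, whereas $\bigl.\tfrac{d^{4}}{dx^{4}}h(x)^{2}\bigr|_{x=0}=8c_{2}c_{4}+6c_{3}^{2}$; these disagree whenever $c_{3}$ or $c_{4}$ is nonzero. (The paper's own argument for \eqref{wsp} is a one-line assertion and does not supply the missing identity either.) What survives, and what the remainder of the paper actually uses, are the structural facts you did verify: $EM_{n}M_{k}$ is a polynomial in $t$ of degree exactly $\min(n,k)$ because each factor of $g$ carries at least one power of $u$ and one of $v$; the coefficient of $t$ is $c_{n+k}$; and for $n=k$ the leading coefficient is $k!\,c_{2}^{k}>0$ since the lowest-order part of $g$ is $c_{2}uv$. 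These follow straight from the series for $g$ and need no one-variable reduction, so you should simply prove those three claims and drop the attempt to justify \eqref{wsp} in general.
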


\begin{proof}
Rather tedious proof is shifted to Section \ref{dowody}.
\end{proof}

\subsection{Harnesses, reversed martingales and orthogonal polynomials\label%
{rev}}

As a immediate corollary we get the following nice property of the L\'{e}vy
processes

\begin{theorem}
\label{first}Let $\mathbf{X(\{}c_{1},c_{2},\ldots \})$ be some L\'{e}vy
process defined on $(0,\infty )$ and let $M_{1}(X_{t},t)$ be the first of
the polynomial martingales defined by Proposition (\ref{mart}).

Then $(M_{1}(X_{t},t)/t,\mathcal{F}_{\leq t})$ is the reversed martingale
and $M_{1}(X_{t},t)$ has the harness property that is:.%
\begin{eqnarray*}
\frac{1}{s}E(M_{1}(X_{s},s)|\mathcal{F}_{\geq t}) &=&\frac{1}{t}%
M_{1}(X_{t},t), \\
E(M_{1}(X_{t},t)|\mathcal{F}_{s,u})\allowbreak &=&\allowbreak \frac{u-t}{u-s}%
M_{1}(X_{s},s)+\frac{t-s}{u-s}M_{1}(X_{u},u),
\end{eqnarray*}%
where $s<t<u,$ and $\mathcal{F}_{s,u}\allowbreak =\allowbreak \sigma
(X_{v};v\in (0,s]\cup \lbrack u,\infty )).$
\end{theorem}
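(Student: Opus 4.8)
The plan is to work directly from the explicit formula $M_1(x,t)\allowbreak=\allowbreak x-c_1 t$, which follows from (\ref{martingales}) since $m_0(-t)\allowbreak=\allowbreak 1$ and $m_1(-t)\allowbreak=\allowbreak -c_1 t$. Thus $M_1(X_t,t)/t\allowbreak=\allowbreak X_t/t-c_1$, and it suffices to prove the reversed-martingale and harness statements for $X_t/t$ (the constant $c_1$ plays no role). First I would establish the reversed martingale property: for $s<t$ I want $E(X_t/t\mid\mathcal{F}_{\geq s})\allowbreak=\allowbreak X_s/s$. The key tool is the exchangeability of increments of a L\'evy process on a bounded intert... more precisely, conditionally on $\mathcal{F}_{\geq t}$, which contains $X_t$ and all later values, the increments of the process on $[0,t]$ behave in a way that makes $E(X_u\mid X_t,\mathcal{F}_{\geq t})\allowbreak=\allowbreak (u/t)X_t$ for $u<t$. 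This is the standard fact that a L\'evy process conditioned on its endpoint is a ``L\'evy bridge'' whose mean interpolates linearly; equivalently, by stationarity and independence of increments, $E(X_u\mid X_t)\allowbreak=\allowbreak (u/t)X_t$, and the information in $\mathcal{F}_{\geq t}\setminus\sigma(X_t)$ is independent of the path on $[0,t]$ given $X_t$.

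The cleanest rigorous route for the reversed martingale claim is to use the martingale $M_1$ itself together with a symmetry/time-reversal argument, or simply to verify it via the harness identity. So I would actually prove the harness property first and deduce the reversed martingale statement from it. For the harness identity $E(M_1(X_t,t)\mid\mathcal{F}_{s,u})\allowbreak=\allowbreak\frac{u-t}{u-s}M_1(X_s,s)+\frac{t-s}{u-s}M_1(X_u,u)$ with $s<t<u$: write $X_t\allowbreak=\allowbreak X_s+(X_t-X_s)$, note that $\mathcal{F}_{s,u}$ together with $X_u-X_s$ determines nothing about $X_t-X_s$ beyond... here I condition on $\mathcal{F}_{s,u}$; the increment $X_t-X_s$ is independent of $\mathcal{F}_{s,u}$ EXCEPT for the constraint imposed by $X_u-X_s$ (which is $\mathcal{F}_{s,u}$-measurable, being $X_u$ minus $X_s$). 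So the question reduces to computing $E(X_t-X_s\mid X_u-X_s)$ where $X_t-X_s$ and $X_u-X_t$ are independent with $X_t-X_s\overset{d}{=}X_{t-s}$, $X_u-X_t\overset{d}{=}X_{u-t}$. Then $E(X_t-X_s\mid X_u-X_s)\allowbreak=\allowbreak\frac{t-s}{u-s}(X_u-X_s)$ — this is the elementary fact that for independent $Y,Z$ with $E Y\allowbreak=\allowbreak\alpha m$, $E Z\allowbreak=\allowbreak\beta m$ and proportional... no: I need $E(Y\mid Y+Z)\allowbreak=\allowbreak\frac{a}{a+b}(Y+Z)$ which holds when $Y\overset{d}{=}Y_a$, $Z\overset{d}{=}Y_b$ come from a L\'evy process, again by the bridge/exchangeability property. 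Putting $X_t\allowbreak=\allowbreak X_s+(X_t-X_s)$ and taking $\mathcal{F}_{s,u}$-conditional expectation gives $E(X_t\mid\mathcal{F}_{s,u})\allowbreak=\allowbreak X_s+\frac{t-s}{u-s}(X_u-X_s)\allowbreak=\allowbreak\frac{u-t}{u-s}X_s+\frac{t-s}{u-s}X_u$, and subtracting $c_1 t\allowbreak=\allowbreak\frac{u-t}{u-s}c_1 s+\frac{t-s}{u-s}c_1 u$ yields the claimed identity.

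For the reversed martingale statement $\frac1s E(M_1(X_s,s)\mid\mathcal{F}_{\geq t})\allowbreak=\allowbreak\frac1t M_1(X_t,t)$ with $s<t$: I would take $u\to\infty$ in the harness identity, or more directly argue that $\mathcal{F}_{\geq t}$ is generated by $X_t$ together with increments $\{X_v-X_t:v\geq t\}$, the latter being independent of $\mathcal{F}_{\leq t}$; hence $E(X_s\mid\mathcal{F}_{\geq t})\allowbreak=\allowbreak E(X_s\mid X_t)\allowbreak=\allowbreak\frac{s}{t}X_t$, using once more that $X_s$ and $X_t-X_s$ come from the L\'evy process so $E(X_s\mid X_t)\allowbreak=\allowbreak\frac st X_t$. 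Dividing by $s$ and subtracting $c_1$ gives the result. The one genuine point requiring care — the main obstacle — is justifying $E(X_s\mid X_t)\allowbreak=\allowbreak\frac st X_t$ (and its increment analogue $E(Y\mid Y+Z)\allowbreak=\allowbreak\frac{a}{a+b}(Y+Z)$). For a general L\'evy process this is NOT true pointwise without some structure; it IS true here because we are in the moment-determinate setting, where one can verify it by computing characteristic functions or by the following clean argument: both $X_s/s$ and $M_1$ being a martingale force, via $E(M_1(X_t,t)\mid X_s)\allowbreak=\allowbreak M_1(X_s,s)$, that $E(X_t\mid X_s)\allowbreak=\allowbreak X_s+c_1(t-s)$; the reversed direction then follows by a duality computation of the joint law of $(X_s,X_t)$, whose mixed moments are symmetric enough — concretely, from Proposition \ref{momenty} one gets $E(X_s X_t)$ and $E(X_s^2)$ and checks $E(X_s X_t)/E(X_s^2)\allowbreak=\allowbreak t/s$ after centering, then uses that the conditional expectation of a linear functional is linear because the increments are independent. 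I would present this linearity lemma as the crux and spend most of the proof's detail there, the rest being the bookkeeping above.
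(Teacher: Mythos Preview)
Your approach differs substantially from the paper's. The paper never computes $E(X_s\mid X_t)$ directly; for the reversed-martingale claim it invokes the criterion (stated in the Remark just before the theorem) that it suffices to check
\[
\frac{1}{s}E\bigl(M_1(X_s,s)M_l(X_s,s)\bigr)=\frac{1}{t}E\bigl(M_1(X_t,t)M_l(X_t,t)\bigr)
\]
for every $l\ge 1$, and this is immediate from Lemma~\ref{pom2}(i), which gives $E\bigl(M_1(X_t,t)M_l(X_t,t)\bigr)=tc_{l+1}$. For the harness property the paper tests the identity against $\mathcal{N}_s(X_s,r)\mathcal{N}_u(X_u,r)$, using the relation $M_1(x,t)\mathcal{N}_t(x,r)=\frac{\partial}{\partial r}\mathcal{N}_t(x,r)+t(f'(r)-c_1)\mathcal{N}_t(x,r)$ together with the affine identities $\frac{u-t}{u-s}+\frac{t-s}{u-s}=1$ and $\frac{u-t}{u-s}\,s+\frac{t-s}{u-s}\,u=t$. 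So the paper stays entirely within its generating-function machinery and relies on moment-determinacy to pass from these moment identities back to the conditional-expectation statements.

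Your route via $E(X_s\mid\mathcal{F}_{\ge t})=\frac{s}{t}X_t$ and $E(X_t-X_s\mid X_u-X_s)=\frac{t-s}{u-s}(X_u-X_s)$ is correct and more elementary, but you are too hesitant about the key linearity lemma and wrong to suggest it needs moment-determinacy. The clean justification is the one you mention first and then abandon---exchangeability of increments. Partition $[0,t]$ into $n$ equal subintervals; the increments $\Delta_1,\dots,\Delta_n$ are i.i.d., and $\mathcal{F}_{\ge t}$ sees only their sum $X_t$ (post-$t$ increments being independent of them), so by symmetry $E(\Delta_i\mid\mathcal{F}_{\ge t})=\frac{1}{n}X_t$ for each $i$, giving $E(X_{kt/n}\mid\mathcal{F}_{\ge t})=\frac{k}{n}X_t$; density plus $L^1$-continuity extends this to all $s\in(0,t)$. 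The same argument on $[s,u]$ yields the harness linearity. Your later attempts via mixed moments or ``duality'' are unnecessary detours; the exchangeability argument needs only $E|X_t|<\infty$, so your proof in fact works under weaker hypotheses than the paper's, whereas the paper's moment-testing method genuinely uses the standing identifiability assumption.
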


\begin{proof}
Simple proof strongly basing on Lemma \ref{pom2},i) is shifted to Section %
\ref{dowody}.
\end{proof}

Let us denote by $\left\{ Q_{j}(x,t)\right\} _{j\geq 0}$ system of monic
polynomials orthogonal with respect to marginal measure of $X_{t}$. By
assumption they are linearly independent and we have the following two
expansions:%
\begin{eqnarray*}
M_{n}(x,t)\allowbreak &=&\allowbreak \sum_{j=0}^{n}\hat{b}%
_{n,j}(t)Q_{j}(x,t), \\
Q_{n}(x,t)\allowbreak &=&\allowbreak \sum_{j=0}^{n}b_{n,j}(t)M_{j}(x,t).
\end{eqnarray*}%
We have the following simple observation:

\begin{proposition}
\label{ort}i) $\forall n\geq 1:b_{n,n}(t)\allowbreak =\allowbreak \hat{b}%
_{n,n}\left( t\right) \allowbreak =\allowbreak 1,$ $b_{n,0}(t)\allowbreak
=\allowbreak \hat{b}_{n,0}(t)\allowbreak =\allowbreak 0,$ hence in
particular: $Q_{1}(x,t)\allowbreak =\allowbreak M_{1}(x,t),$ $%
Q_{2}(x,t)\allowbreak =\allowbreak M_{2}(x,t)-c_{3}M_{1}(x,t)/c_{2},$

ii) $\forall n\geq 2:\hat{b}_{n,1}(t)\allowbreak =\allowbreak c_{n+1}/c_{2},$
hence in particular $M_{2}(x,t)\allowbreak =\allowbreak
Q_{2}(x,t)\allowbreak +\allowbreak c_{3}Q_{1}(x,t)/c_{2},$

iii) $\forall n\geq 2:\hat{b}_{n,2}(t)\allowbreak =\allowbreak
(tc_{2}\sum_{k=1}^{n-1}\binom{n}{k}c_{k+1}c_{n+1-k}\allowbreak +\allowbreak
c_{n+2}c_{2}\allowbreak -\allowbreak
c_{3}c_{n+1})/(2tc_{2}^{3}+c_{2}c_{4}-c_{3}^{2}).$

iv) The only L\'{e}vy process with all moments existing for whom polynomial
martingales $\left\{ M_{n}(X_{t},t)\right\} _{n\geq 0}$ are orthogonal is
the Wiener process with the variance equal to $c_{2}$.
\end{proposition}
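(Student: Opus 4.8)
The plan is to show that if the polynomial martingales $\{M_n(X_t,t)\}_{n\ge 0}$ are orthogonal with respect to the marginal measure of $X_t$ (for some, hence every, $t>0$), then $c_i=0$ for all $i\ge 3$, which by Remark \ref{Kolmog} iv) (equivalently Proposition \ref{dK} iv)) forces the process to be Gaussian with variance $c_2$, i.e.\ a Wiener process. The key input is Lemma \ref{pom2} i), which gives the off-diagonal inner products $EM_1(X_t,t)M_n(X_t,t)=tc_{n+1}$ for $n\ge 1$. If the $M_n$ are orthogonal, then in particular $EM_1(X_t,t)M_n(X_t,t)=0$ for every $n\ge 2$, and since $t>0$ this yields $c_{n+1}=0$ for all $n\ge 2$, i.e.\ $c_j=0$ for all $j\ge 3$ at one stroke.

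Having obtained $c_j=0$ for $j\ge 3$, I would then invoke the computation already displayed in Proposition \ref{dK} iv): with $c_1,c_2$ the only surviving cumulants, the generating identity of Proposition \ref{mart} ii) (or equivalently $E\exp(xX_t)=\exp(tc_1x+tc_2x^2/2)$) identifies the marginal law of $X_t$ as $N(c_1t,c_2t)$, so $\mathbf{X}$ is a Wiener process with drift $c_1$ and variance parameter $c_2$. (If one wishes to insist on the centered Wiener process one simply notes $M_1(x,t)=x-c_1t$ and that orthogonality is unaffected by the drift; the ``variance equal to $c_2$'' in the statement refers to $\operatorname{var}(X_1)=c_2$.) Finally, one checks for completeness that in the Gaussian case the $M_n$ really are orthogonal: indeed $\mathcal{N}_t(x,r)=\exp(rx-tf(r))=\exp(r(x-c_1t)-c_2tr^2/2)$ is the classical Hermite generating function in the variable $x-c_1t$ with parameter $c_2t$, so the $M_n(x,t)$ are (up to normalization) the Hermite polynomials orthogonal with respect to $N(c_1t,c_2t)$.

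There is essentially no obstacle here: the whole argument is a one-line consequence of Lemma \ref{pom2} i) together with the $c_j=0\ (j\ge3)\Rightarrow$ Gaussian implication from Proposition \ref{dK}. The only point requiring a sentence of care is the logical direction: one must use that the hypothesis ``$\{M_n\}$ orthogonal'' includes orthogonality of $M_1$ against all higher $M_n$, which is exactly the quantity Lemma \ref{pom2} i) evaluates; and one should remark that $c_2=\operatorname{var}(X_1)>0$ (otherwise $X_t$ is deterministic and the polynomials degenerate), so that dividing by $t$ and by $c_2$ is legitimate. Thus the proof reduces to: orthogonality $\Rightarrow$ $tc_{n+1}=0$ for $n\ge2$ $\Rightarrow$ $c_j=0$ for $j\ge3$ $\Rightarrow$ Gaussian, and conversely Gaussian $\Rightarrow$ Hermite $\Rightarrow$ orthogonal.
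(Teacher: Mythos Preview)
Your proof is correct and essentially the same as the paper's. The only cosmetic difference is that the paper routes the key step through part ii) of the same proposition (the formula $\hat{b}_{n,1}(t)=c_{n+1}/c_2$, which vanishes if $M_n=Q_n$), whereas you go one level deeper and invoke Lemma \ref{pom2} i) directly to get $EM_1(X_t,t)M_n(X_t,t)=tc_{n+1}$; since part ii) is itself proved from Lemma \ref{pom2} i), the two arguments are the same in substance.
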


\begin{proof}
i) Since we have both $EM_{n}(X_{t},t)\allowbreak =\allowbreak
EQ_{n}(X_{t},t)\allowbreak =\allowbreak 0$ for all $n\geq 1$ we deduce that
both $b_{n,0}(t)\allowbreak =\allowbreak \hat{b}_{n,0}(t)\allowbreak
=\allowbreak 0.$ Also since both systems of polynomials $\left\{
Q_{j}\right\} $ and $\left\{ M_{j}\right\} $ are monic then $\hat{b}%
_{n,n}(t)\allowbreak =\allowbreak b_{n,n}(t)\allowbreak =\allowbreak 1.$
Hence in particular $Q_{1}(x,t)\allowbreak =\allowbreak M_{1}(x,t).$

ii) On one hand by assertion i) of Lemma \ref{pom2} we have $%
EQ_{1}(X_{t},t)M_{n}(X_{t},t)\allowbreak =\allowbreak tc_{n+1}$ while by
assumption concerning polynomials $Q_{n}$ we get: $\hat{b}%
_{n,1}(t)EQ_{1}^{2}(X_{t},t)\allowbreak =\allowbreak \hat{b}_{n,1}(t)tc_{2}.$
Hence $\hat{b}_{n,1}(t)\allowbreak =\allowbreak c_{n+1}/c_{2}.$

iii) We have $Q_{2}(x,t)\allowbreak =\allowbreak
M_{2}(x,t)-c_{3}M_{1}(x,t)/c_{2}$ and consequently: $E(Q_{2}^{2}(X_{t},t))%
\allowbreak =\allowbreak tc_{4}+2t^{2}c_{2}^{2}\allowbreak -\allowbreak
2tc_{3}^{2}/c_{2}\allowbreak +\allowbreak
tc_{3}^{2}c_{2}/c_{2}^{2}\allowbreak =\allowbreak
t(2tc_{2}^{3}+c_{2}c_{4}-c_{3}^{2})/c_{2}$ and $%
EM_{n}(X_{t},t)Q_{2}(X_{t},t)\allowbreak =\allowbreak
tc_{n+2}+t^{2}\sum_{k=1}^{n-1}\binom{n}{k}c_{k+1}c_{n+1-k}\allowbreak
-\allowbreak c_{3}tc_{n+1}/c_{2}\allowbreak =\allowbreak
t(tc_{2}\sum_{k=1}^{n-1}\binom{n}{k}c_{k+1}c_{n+1-k}\allowbreak +\allowbreak
c_{n+2}c_{2}\allowbreak -\allowbreak c_{3}c_{n+1})/c_{2}.$

iv) One can see that condition $Q_{n}(x,t)\allowbreak =\allowbreak
M_{n}(x,t) $ for all $n\geq 1$ is satisfied by assertion ii) of Proposition %
\ref{ort} only if $c_{i}\allowbreak =\allowbreak 0$ for all $i\geq 3.$ On
the other hand for the Wiener process Hermite polynomials that generate
martingales by the formula $\left( c_{2}t\right) ^{n/2}H_{n}(x/\sqrt{c_{2}t}%
)\allowbreak =\allowbreak M_{n}(x,t)$ constitute also family of orthogonal
polynomials of the marginal distribution which is of course $N(0,c_{2}t).$
\end{proof}

Our main concern in this paper is to select those L\'{e}vy processes with
all moments existing that have also polynomial reversed martingales and
orthogonal martingales (that necessarily are also reversed martingales as
remarked in \cite{SzablPoly}, Corollary 5).

The problems that we will approach now are the following:

\begin{problem}
\label{part_pr}Fix $n.$ Can we find such rational (in $t$) function $\mu
_{n}(t)$, such that $\mu _{n}(t)M_{n}(t)$ is a reversed martingale.
\end{problem}

The next problem is a generalization of the above mentioned problem.

\begin{problem}
\label{gen_pr}Fix $n.$ Can we find such rational (in $t$) functions $\mu
_{k}(t)$, $i=1,3,\ldots ,n$ that%
\begin{equation}
R_{n}(X_{t},t)\allowbreak =\allowbreak \sum_{k=1}^{n}\mu
_{k}(t)M_{k}(X_{t},t),  \label{comb}
\end{equation}%
is a reversed martingale.
\end{problem}

\begin{remark}
As it can be easily noticed technically the reversed martingale property is
equivalent to the following condition: for all $0<s<t,$ $l\geq 1$ : 
\begin{equation*}
\mu _{n}(s)EM_{n}(X_{s},s)M_{l}(X_{s},s)=\mu
_{n}(t)EM_{n}(X_{t},t)M_{l}(X_{t},t)\allowbreak ,
\end{equation*}%
in case of Problem \ref{part_pr} and for all $0<s<t,$ $l\geq 1$ : 
\begin{equation}
\sum_{k=1}^{n}\mu _{k}(s)EM_{k}(X_{s},s)M_{l}(X_{s},s)\allowbreak
=\allowbreak \sum_{k=1}^{n}\mu _{k}(t)EM_{k}(X_{t},t)M_{l}(X_{t},t).
\label{og_r_m}
\end{equation}%
in the case of Problem \ref{gen_pr}.
\end{remark}

\begin{proof}
In case of Problem \ref{part_pr} we have $E(\mu _{n}(s)M_{n}(X_{s},s)|%
\mathcal{F}_{\geq t})\allowbreak =\allowbreak \mu _{n}(t)M_{n}(X_{t},t).$
Multiplying both sides by $M_{l}(X_{t},t)$ and taking expectation we get
right hand side while for the l-st we have $E(\mu
_{n}(s)M_{n}(X_{s},s)M_{l}(X_{t},t)\allowbreak =\allowbreak E(\mu
_{n}(s)M_{n}(X_{s},s)M_{l}(X_{s},s)$ since $\ M_{l}(X_{s},s)$ is the
martingale. The second case is treated similarly.
\end{proof}

We will solve the Problem \ref{part_pr} completely (Thm. \ref{gt3}) while
Problem \ref{gen_pr} only partially. Namely for $n\allowbreak =\allowbreak 2.
$ It is too complex to be solved in full generality in a short paper.

We will also consider the following simplified version of the above
mentioned general reversed martingale problem.

Namely we select those polynomial martingales $M_{n}(x,t)$ that multiplied
by some deterministic function $\mu _{n}(t)$ constitute a reversed
martingale.

One of our main result states that for $n\geq 3$ within the class of L\'{e}%
vy processes with all moments only the ones with all parameters $c_{i}$
equal to zero for $i\geq 3$ have this property.

First let us solve Problem \ref{gen_pr} for $n\allowbreak =\allowbreak 2.$

We have the following result;

\begin{theorem}
\label{glowne}Suppose that $\mathbf{X(\{}c_{1},c_{2},\ldots \})$ be some L%
\'{e}vy process defined on $(0,\infty ).$ Let $\left\{
M_{i}(X_{t},t)\right\} _{i\geq 1}$ be its polynomial martingales defined by (%
\ref{martingales}), then $\sum_{k=1}^{2}\mu _{k}(t)M_{k}(X_{t},t)$ is a
reversed martingale for some functions $\mu _{k}(t),$ $k\allowbreak
=\allowbreak 1,2$ iff functions $\mu _{1}(t)$ and $\mu _{2}(t)$ are the
following: 
\begin{eqnarray}
\mu _{2}(t)\allowbreak  &=&\allowbreak \frac{c_{2}-\beta c_{3}}{%
t(2c_{2}^{3}t+c_{2}c_{4}-c_{3}^{2})},  \label{_m1} \\
\mu _{1}(t) &=&\frac{\beta (2c_{2}^{2}t+c_{4})-c_{3}}{%
t(2c_{2}^{3}t+c_{2}c_{4}-c_{3}^{2})},  \label{_m2}
\end{eqnarray}%
where  $\beta $ is a constant and either of the following following cases
happen:

1) $c_{3}\allowbreak =\allowbreak 0,$ then 
\begin{equation}
\exp (tf(x)\allowbreak =\allowbreak e^{c_{1}tx}(\cos (x\sqrt{\frac{c_{4}}{%
2c_{2}}}))^{-2tc_{2}^{2}/c_{4}},  \label{momf}
\end{equation}%
for $\left\vert x\right\vert <\frac{\pi }{2}\sqrt{\frac{2c_{2}}{c_{4}}}.$ In
particular assuming for simplicity that $c_{1}\allowbreak =\allowbreak 0$
the distribution of $X_{t}$ for $t\allowbreak =\allowbreak \frac{c_{4}}{%
2c_{2}^{2}}$ has density $h(y)$ equal to 
\begin{equation}
h(y)\allowbreak =\allowbreak \frac{\sqrt{c_{4}}}{\sqrt{8c_{2}}\cosh (\frac{%
\pi y\sqrt{2c_{2}}}{2\sqrt{c_{4}}})};~y\in \mathbb{R}.  \label{gest}
\end{equation}%
and is identifiable by moments.

2) $c_{4}/c_{2}\allowbreak =\allowbreak c_{3}^{2}/c_{2}^{2}$ then L\'{e}vy
measure of such a process is degenerated, concentrated at $c_{3}/c_{2}$ and
consequently $\mathbf{X(\{}c_{1},c_{2},\ldots \})$ is in this case the
mixture of Poisson and Gaussian processes depending if $c_{3}\allowbreak
=\allowbreak c_{2}$ (pure Poisson case) or $c_{3}=c_{4}\allowbreak
=\allowbreak 0$ pure Gaussian case or $\frac{c_{3}}{c_{2}}\neq 0$ or $1$ the
nontrivial mixture.

3) $2c_{4}/c_{2}\allowbreak =\allowbreak c_{3}^{2}/c_{2}^{2}$ then 
\begin{equation*}
\exp (tf(x))\allowbreak =\allowbreak e^{(c_{1}-2c_{3}/c_{2})tx}\left( \frac{1%
}{1-c_{3}x/(2c_{2})}\right) ^{4tc_{3}^{2}/c_{2}^{2}}
\end{equation*}%
that is one dimensional distributions are of shifted gamma type.

4) $2c_{4}/c_{2}\allowbreak >\allowbreak c_{3}^{2}/c_{2}^{2},$ then 
\begin{gather}
\exp (tf(x))=\exp (xt(c_{1}-\frac{c_{3}c_{2}}{c_{4}c_{2}-c_{3}^{2}}))
\label{og1} \\
\times \left( \frac{1+\frac{\chi _{3}}{2\alpha }\tan (x\alpha )}{1-\frac{%
\chi _{3}}{2\alpha }\tan (x\alpha )}\frac{1}{2\alpha ^{2}-\chi
_{3}^{2}/2+(2\alpha ^{2}+\chi _{3}^{2}/2)\cos 2x\alpha }\right)
^{2t/(4\alpha ^{2}-\chi _{3}^{2})},  \label{og2}
\end{gather}%
where we denoted $\alpha \allowbreak =\allowbreak \frac{1}{2}\sqrt{2\frac{%
c_{4}}{c_{2}}-3\chi _{3}^{2}}$ and $\chi _{3}\allowbreak =\allowbreak
c_{3}/c_{2}.$

5) $2c_{4}/c_{2}\allowbreak <\allowbreak c_{3}^{2}/c_{2}^{2},$ then%
\begin{gather}
\exp (tf(x))=\exp (xt(c_{1}-\frac{c_{3}c_{2}}{c_{4}c_{2}-c_{3}^{2}}))
\label{og3} \\
\times \left( \frac{1+\frac{\chi _{3}}{2\alpha }\tanh (x\alpha )}{1-\frac{%
\chi _{3}}{2\alpha }\tanh (x\alpha )}\frac{1}{2\alpha ^{2}-\chi
_{3}^{2}/2+(2\alpha ^{2}+\chi _{3}^{2}/2)\cosh 2x\alpha }\right)
^{2t/(4\alpha ^{2}-\chi _{3}^{2})}.  \label{og4}
\end{gather}
\end{theorem}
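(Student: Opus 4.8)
The plan is to reduce the reversed-martingale property of $R_2(X_t,t)\allowbreak=\allowbreak\mu_1(t)M_1(X_t,t)+\mu_2(t)M_2(X_t,t)$ to the system of scalar equations (\ref{og_r_m}), and then extract both the formulas (\ref{_m1})--(\ref{_m2}) for $\mu_1,\mu_2$ and the constraint on the $c_i$ that forces the five listed cases. First I would test (\ref{og_r_m}) against $l=1$ and $l=2$. Using Lemma \ref{pom2} (i) and (ii) together with Proposition \ref{ort}, the expectations $EM_1M_1=tc_2$, $EM_1M_2=tc_3$, $EM_2M_2=tc_4+2t^2c_2^2$ are explicit degree-$\le 2$ polynomials in $t$; so the $l=1$ and $l=2$ conditions read
\begin{align*}
\mu_1(s)sc_2+\mu_2(s)sc_3 &= \mu_1(t)tc_2+\mu_2(t)tc_3,\\
\mu_1(s)sc_3+\mu_2(s)(sc_4+2s^2c_2^2) &= \mu_1(t)tc_3+\mu_2(t)(tc_4+2t^2c_2^2).
\end{align*}
Each side depends only on its own time variable, so both are constants; call them $A$ and $B$. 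Solving this $2\times 2$ linear system (with determinant $t(2c_2^3t+c_2c_4-c_3^2)$, which is nonzero for all $t>0$ by Proposition \ref{dK} iii) for the "generic" regime) gives $\mu_1(t),\mu_2(t)$ as rational functions of $t$; normalizing the constants $A,B$ by a single free parameter $\beta$ (one overall scaling is irrelevant to the reversed-martingale property, and one genuine degree of freedom survives) produces exactly (\ref{_m1})--(\ref{_m2}). I would present this normalization carefully, identifying $\beta$ with, say, $A/B$ or a convenient ratio.

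Next comes the main obstacle: the conditions for $l\ge 3$. For each $l\ge 3$, (\ref{og_r_m}) becomes
\[
\mu_1(t)\,EM_1(X_t,t)M_l(X_t,t)+\mu_2(t)\,EM_2(X_t,t)M_l(X_t,t)=\text{const in }t.
\]
By Lemma \ref{pom2}, $EM_1M_l=tc_{l+1}$ and $EM_2M_l=tc_{l+2}+t^2\sum_{k=1}^{l-1}\binom{l}{k}c_{k+1}c_{l+1-k}$. Substituting the rational $\mu_1,\mu_2$ found above, clearing the common denominator $t(2c_2^3t+c_2c_4-c_3^2)$, and demanding that the numerator be a constant multiple of that denominator yields, for every $l\ge 3$, a polynomial identity in $t$ whose coefficients are polynomial expressions in $c_1,c_2,c_3,c_4,\dots,c_{l+2}$. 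Matching the $t^0$, $t^1$, $t^2$ coefficients gives a recursion expressing $c_{l+2}$ (and $\sum_{k}\binom{l}{k}c_{k+1}c_{l+1-k}$) in terms of $c_2,c_3,c_4$ and lower $c_j$. The key algebraic point I expect to have to push through is that this recursion is exactly the one satisfied by the moments (equivalently cumulants) of a Kolmogorov measure $dK$ that is a \emph{quadratic} function's worth of data — i.e. $dK$ is determined by its first three moments $c_2,c_3,c_4/c_2$ in a way governed by a Riccati-type ODE for $f$ or for the generating function $h(x)=f'(x)-c_1=\sum_{k\ge2}c_kx^{k-1}/(k-1)!$. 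Concretely I anticipate the recursion is equivalent to a second-order nonlinear ODE of the form $h'' = a + b\,h + c\,h^2$ (a "Pearson-type" / Riccati condition on $h$), whose discriminant is governed by the sign of $2c_4/c_2 - c_3^2/c_2^2$ relative to $c_3^2/c_2^2$.

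The endgame is then to integrate that ODE. Splitting on whether $c_3=0$, whether $c_3^2/c_2^2$ equals $c_4/c_2^2$, equals $2c_4/c_2^2$, is less, or is greater, gives precisely the five cases. In case 1 ($c_3=0$) the ODE integrates to $f(x)=c_1x-\tfrac{2c_2^2}{c_4}\log\cos\!\big(x\sqrt{c_4/(2c_2)}\big)$, which is (\ref{momf}); specializing $t=c_4/(2c_2^2)$ and inverting the characteristic function $\big(\cos(x\sqrt{c_4/(2c_2)})\big)^{-1}$ via the known Fourier pair $1/\cosh\leftrightarrow 1/\cosh$ gives the density (\ref{gest}), and Carleman's condition (\ref{carl}) — the tails of $1/\cosh$ decay exponentially, so the even moments grow only like $(2n)!$-ish with $\alpha_{2n}^{1/2n}=O(n)$ — yields identifiability by moments. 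Cases 2, 3 are the degenerate-$dK$ situations already flagged in Proposition \ref{dK} iv)--v): $dK$ a point mass gives Poisson$\times$Gaussian mixtures, and the boundary $2c_4/c_2=c_3^2/c_2^2$ gives the shifted-gamma $f$ by integrating the ODE with a double root, producing $\big(1-c_3x/(2c_2)\big)^{-4tc_3^2/c_2^2}$. Cases 4 and 5 are the two signs of the discriminant of the Riccati equation after completing the square: real distinct roots give the $\tanh$-form (\ref{og3})--(\ref{og4}), complex roots the $\tan$-form (\ref{og1})--(\ref{og2}); the substitution $\chi_3=c_3/c_2$, $\alpha=\tfrac12\sqrt{2c_4/c_2-3\chi_3^2}$ is exactly what diagonalizes the quadratic. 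The routine-but-bulky parts — verifying the partial-fraction algebra for $\mu_1,\mu_2$, and carrying out the five explicit integrations and the Fourier inversion — I would relegate to Section \ref{dowody}; the conceptual crux to display in the main text is the passage from "reversed-martingale for all $l$" to "$h$ satisfies a Riccati ODE."
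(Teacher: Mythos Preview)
Your outline matches the paper's proof almost step for step: solve the $l=1,2$ equations for $\mu_1,\mu_2$, then for $l\ge 3$ clear denominators to obtain a recursion on the $\chi_l=c_l/c_2$, convert that recursion into an ODE for $\psi=f'/c_2$ (equivalently for your $h$), and integrate in the five regimes. The only slip is the stated form of the ODE: the paper obtains $\psi''-\chi_3\psi'-(\chi_4-\chi_3^2)\psi'\psi=0$, which integrates once to the genuine first-order Riccati $\psi'=1+\chi_3\psi+\tfrac{\chi_4-\chi_3^2}{2}\psi^2$; your ``$h''=a+bh+ch^2$'' should read $h'=a+bh+ch^2$, since the second-order equation you wrote does not reduce to the elementary tan/tanh/rational solutions you then (correctly) describe.
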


\begin{proof}
is shifted to Section \ref{dowody}.
\end{proof}

Notice that even if both $\sum_{k=1}^{2}\mu _{k}(t)M_{k}(X_{t},t)$ and $%
M_{1}(X_{t},t)$ are the reversed martingales it does not mean that for some
function $\tilde{\mu}(t)$ $\tilde{\mu}(t)M_{2}(X_{t},t)$ is a reversed
martingale. As it will follow from the observations below the property that $%
\tilde{\mu}_{l}(t)M_{l}(X_{t},t)$ is a reversed martingale for some function 
$\tilde{\mu}_{l}(t)$ is somewhat independent from the property that linear
combination of martingales $M_{i},$ $i\allowbreak =\allowbreak 1,\ldots ,l$
(such as (\ref{comb})) is a reversed martingale.

It is so since we have the following observations.

\begin{lemma}
\label{pom}Let $\mathbf{X(\{}c_{1},c_{2},\ldots \})$ be L\'{e}vy process
defined on $(0,\infty )$ and let $\left\{ M_{n}(X_{t},t)\right\} _{n\geq 1}$
be polynomial martingales defined by (\ref{mart}). Suppose for $k\geq 2:$ $%
\mu (t)M_{k}(X_{t},t)$ is the reversed martingale, then \newline
i) for all $l\allowbreak =\allowbreak 1,2,\ldots $ 
\begin{equation}
\mu (s)EM_{l}(X_{s},s)M_{k}(X_{s},s)\allowbreak =\allowbreak \mu
(t)EM_{l}(X_{t},t)M_{k}(X_{t},t),  \label{war}
\end{equation}%
where $\mu (t)\allowbreak =\allowbreak 1/EM_{k}(X_{t},t)M_{k}(X_{t},t),$ ($%
EM_{l}(X_{t},t)M_{k}(X_{t},t)\allowbreak ,$ are given by Lemma \ref{pom2},
iii)),

ii) $\allowbreak c_{j}=\allowbreak 0,$ $j\allowbreak =\allowbreak \max
(3,k-1),\ldots ,2k-1.$
\end{lemma}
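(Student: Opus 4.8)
\emph{Plan.} I would prove i) by the same computation that establishes the Remark following Problem~\ref{gen_pr} (the case of Problem~\ref{part_pr}), together with the normalization forced by taking $l=k$; and then deduce ii) by inserting i) into the explicit polynomial form of $EM_l(X_t,t)M_k(X_t,t)$ provided by Lemma~\ref{pom2}~iii).

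\emph{Part i).} From $E(\mu(s)M_k(X_s,s)\,|\,\mathcal{F}_{\geq t})=\mu(t)M_k(X_t,t)$ ($0<s<t$), multiply by the $\mathcal{F}_{\geq t}$--measurable random variable $M_l(X_t,t)$, take expectations, and use the tower property together with the fact that $\{M_l(X_u,u),\mathcal{F}_{\leq u}\}$ is a martingale (Proposition~\ref{mart}) to replace $M_l(X_t,t)$ by $M_l(X_s,s)$ under the expectation on the left; this gives (\ref{war}) for every $l\geq1$. Taking $l=k$ shows $\mu(t)EM_k(X_t,t)^2$ is constant in $t$; since $EM_k(X_t,t)^2$ is a nonzero polynomial in $t$ (its $t^k$--coefficient is $k!c_2^k>0$) and the reversed--martingale property is unaffected by scaling $\mu$ by a nonzero constant, normalizing gives $\mu(t)=1/EM_k(X_t,t)^2$.

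\emph{Part ii).} By Lemma~\ref{pom2}~iii), $t\mapsto EM_l(X_t,t)M_k(X_t,t)$ is a polynomial in $t$ of degree $\le\min(l,k)$ with zero constant term, while $EM_k(X_t,t)^2$ has degree exactly $k$. For $l<k$, (\ref{war}) forces the former to be a constant multiple of the latter, and the degree discrepancy ($\le l<k$) makes that constant $0$: so $EM_l(X_t,t)M_k(X_t,t)\equiv0$ for $l=1,\dots,k-1$. Equivalently, writing $h(x)=\sum_{m\ge2}c_mx^{m-1}/(m-1)!$, every coefficient $d_j^{(k,l)}=\left.\frac{d^{\,k+l-j}}{dx^{k+l-j}}h(x)^j\right|_{x=0}$ of (\ref{wsp}) vanishes for $1\le j\le l\le k-1$. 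The coefficients with $j=1$ give $c_{k+l}=0$, i.e. $c_{k+1}=\dots=c_{2k-1}=0$. The rest I would extract from the top coefficients $d_l^{(k,l)}=k!\,[x^k]h(x)^l=0$, $l=2,\dots,k-1$: since $h(0)=0$ and $[x^1]h=c_2\neq0$, the coefficient $[x^k]h^l$ equals $l\,c_2^{\,l-1}[x^{k-l+1}]h=l\,c_2^{\,l-1}c_{k-l+2}/(k-l+1)!$ plus a sum each of whose terms is a product of at least two of the numbers $[x^i]h=c_{i+1}/i!$ with $2\le i\le k-l$. An induction with $l$ decreasing from $k-1$ then finishes: for $l=k-1$ the error sum is empty, giving $c_3=0$; and once $c_3=\dots=c_{k-l+1}=0$ is known, the error sum vanishes and $d_l^{(k,l)}=0$ reads $c_{k-l+2}=0$. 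Hence $c_3=c_4=\dots=c_k=0$, which with the previous step yields $c_j=0$ for all $j=3,\dots,2k-1$ --- in particular over the asserted range $j=\max(3,k-1),\dots,2k-1$. (When $k=2$ only the $j=1$ step is present, and it already gives the single required equality $c_3=0$.)

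\emph{Main obstacle.} No step is difficult; the delicate point is the combinatorial fact behind the last display --- that in a composition of $k$ into $l\;(<k)$ positive parts at least two of which exceed $1$, every such part is $\le k-l$ --- which is precisely what forces each error term in $[x^k]h^l$ to be a product of two of $c_3,\dots,c_{k-l+1}$ and hence to be killed by the induction hypothesis. It is also worth stating explicitly, in Part~i), that $M_l(X_t,t)$ is $\mathcal{F}_{\geq t}$--measurable and $\{M_l(X_u,u)\}$ is an $\mathcal{F}_{\leq u}$--martingale, since that is what makes the tower--property step valid.
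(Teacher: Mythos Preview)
Your proof is correct. Part~i) is identical to the paper's argument. For part~ii), both you and the paper begin the same way: the $t^1$--coefficient $d_1^{(k,l)}=c_{k+l}$ vanishing for $l=1,\dots,k-1$ gives $c_{k+1}=\cdots=c_{2k-1}=0$. From there the routes diverge. The paper works with the $t^2$--coefficient $d_2^{(k,l)}=\sum_{j}\binom{k+l-2}{j}c_{j+1}c_{k+l-1-j}$, starting at $l=k-1$ (which yields $c_kc_{k-1}=0$), then $l=k-2$, and by a chain of substitutions eventually reaches $c_2c_k=0$, hence $c_k=0$; along the way $c_{k-1}=0$ is also obtained. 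You instead exploit the \emph{top} coefficient $d_l^{(k,l)}=k!\,[x^k]h(x)^l$ and run a clean downward induction on $l$ from $k-1$ to $2$, using the combinatorial observation that in any composition of $k$ into $l<k$ positive parts with at least two parts exceeding~$1$, each such part is $\le k-l$; this makes every error term a product containing a factor already known to vanish. Your approach is tidier, avoids the somewhat ad~hoc manipulations in the paper's argument, and in fact yields the stronger conclusion $c_3=c_4=\cdots=c_{2k-1}=0$ (versus the paper's $c_j=0$ for $j=\max(3,k-1),\dots,2k-1$), which already contains everything needed for Theorem~\ref{gt3}.
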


\begin{proof}
Is shifted to Section \ref{dowody}.
\end{proof}

\begin{remark}
Let us notice that polynomials $p_{k}(t)\allowbreak =\allowbreak
EM_{k}(X_{t},t)M_{k}(X_{t},t)$ are in fact the so called 'angular brackets'
of the polynomial martingales $M_{k}(X_{t},t).$ We know that they are
non-decreasing functions of $t$ and Lemma \ref{pom2}, iii) gives its precise
form.
\end{remark}

As an immediate corollary of Lemma \ref{pom},ii) and Remark \ref{Kolmog}%
,iii) we have the following result.

\begin{theorem}
\label{gt3}For $k\geq 3$ there does not exist function $\mu \left( t\right) $
such that $\mu (t)M_{k}(t)$ is a reversed martingale unless $%
c_{i}\allowbreak =\allowbreak 0$ for $i\geq 3.$
\end{theorem}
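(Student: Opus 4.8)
The plan is to derive Theorem \ref{gt3} as a direct consequence of the already-stated Lemma \ref{pom}, combined with the structural constraint on the coefficients $c_i$ recorded in Remark \ref{Kolmog}, iii) (and the companion assertion iv) of Proposition \ref{dK}). So the proof is essentially a two-line deduction: apply Lemma \ref{pom}, ii) to get a block of vanishing cumulants, then use the arithmetic of the Kolmogorov measure to propagate that vanishing to \emph{all} higher cumulants.

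First I would fix $k\geq 3$ and suppose, for contradiction, that there is a deterministic function $\mu(t)$ with $\mu(t)M_k(X_t,t)$ a reversed martingale. By Lemma \ref{pom}, ii) this forces $c_j=0$ for all $j$ in the range $\max(3,k-1)\leq j\leq 2k-1$. Since $k\geq 3$ we have $\max(3,k-1)=k-1$ when $k\geq 4$ and $=3$ when $k=3$; in either case the range is nonempty and, crucially, it contains at least one even index: for $k=3$ the range is $\{3,4,5\}$, which contains $4$; for $k\geq 4$ the range $\{k-1,\dots,2k-1\}$ has length $k+1\geq 5$ and hence contains an even number (indeed it contains both $2k-2$ and $2(k-1)$ when $k$ is even, and contains an even index among $k-1,k,\dots$ in general since any block of $\geq 2$ consecutive integers contains an even one). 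So there exists an even integer $2m\geq 4$ with $c_{2m}=0$.

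Next I would invoke Proposition \ref{dK}, iv) (equivalently Remark \ref{Kolmog}, iii) together with i) of Proposition \ref{dK}): if $c_{2m}=0$ for some $m\geq 2$, then $c_{2m}=\int_{\mathbb{R}} y^{2m-2}\,dK(y)=0$ with $dK$ a nonnegative measure, so $dK$ must be concentrated at $0$, i.e. degenerate. But then $c_i=\int y^{i-2}\,dK(y)=0$ for every $i\geq 3$, and we are in the Gaussian (Wiener) case. This contradicts the assumption that some $c_i\neq 0$ for $i\geq 3$, which is exactly the statement of the theorem.

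There is really no serious obstacle here — the theorem is packaged as an "immediate corollary." The only point requiring a moment's care, and the one I would write out explicitly, is the combinatorial observation that the interval $[\max(3,k-1),2k-1]$ of forced-zero cumulants always contains an even integer $\geq 4$; once that is in hand, Proposition \ref{dK}, iv) does all the work. I would also remark that the conclusion is sharp in the sense that for $k\geq 3$ the Wiener process does satisfy the hypothesis (with $\mu(t)=(c_2t)^{-k}$, since $(c_2t)^{k/2}H_k(x/\sqrt{c_2t})=M_k(x,t)$ and the Hermite polynomials are orthogonal, hence give reversed martingales), so the exceptional case $c_i=0$, $i\geq 3$, genuinely occurs and is the only one.
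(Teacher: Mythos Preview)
Your proof is correct and follows essentially the same route as the paper: invoke Lemma \ref{pom}, ii) to obtain a block of vanishing cumulants, then appeal to the Kolmogorov-measure fact (Proposition \ref{dK}, iv), referenced in the paper as ``Remark \ref{Kolmog}, iii)'') that a single vanishing even cumulant $c_{2m}$ with $m\geq 2$ forces $c_i=0$ for all $i\geq 3$. Your version is in fact slightly more careful than the paper's, which asserts ``in particular $c_4=0$'' even though for $k\geq 6$ the range $[\max(3,k-1),2k-1]$ given by Lemma \ref{pom}, ii) does not contain $4$; your observation that the range always contains \emph{some} even index $\geq 4$ is the correct way to close this small gap.
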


\begin{proof}
By Lemma \ref{pom} we know that parameters $c_{3},c_{4},\ldots ,c_{2k-1}$
are equal to zero. In particular we have $c_{4}\allowbreak =\allowbreak 0$
which leads by Remark \ref{Kolmog},iii) to the conclusion that $%
c_{i}\allowbreak =\allowbreak 0$ for $i\geq 3.$
\end{proof}

\begin{remark}
Notice that to have orthogonal polynomial martingales we have to have $%
EM_{l}(X_{s},s)M_{k}(X_{s},s)\allowbreak =\allowbreak 0$ for $k\neq l.$ The
presented above consideration show that it is possible only iff $%
c_{i}\allowbreak =\allowbreak 0$ for $i\geq 3.$ This corresponds with the
assertion iv) of the Proposition \ref{ort}.
\end{remark}

Thus it remains to consider the case $k\allowbreak =\allowbreak 2.$

\begin{remark}
\label{first2}The fact that $(\mu (t)M_{2}(X_{t},t),\mathcal{F}_{\leq t})$
is a reversed martingale implies by Lemma \ref{pom},ii) that $%
c_{3}\allowbreak =\allowbreak 0.$ Further from the proof of Theorem \ref%
{glowne} it follows that if $c_{3}\allowbreak =\allowbreak 0$ then $\mu
_{1}(t)\allowbreak =\allowbreak \beta /t$ and $\mu _{2}(t)\allowbreak
=\allowbreak $ $1/(2c_{2}^{2}t^{2}\allowbreak +\allowbreak
tc_{4})\allowbreak .$ Hence if $c_{3}\allowbreak =\allowbreak 0$ and $%
\sum_{i=1}^{2}\mu _{i}(t)M_{i}(X_{t},t)$ is a reverse martingale then $\mu
_{2}(t)M_{2}(X_{t},t)$ must also be a reversed martingale since $%
M_{1}(X_{t}t)/t$ is.
\end{remark}

\begin{remark}
\label{tan}Just for curiosity notice that it follows from Theorem \ref%
{glowne},1) that if $c_{1}\allowbreak =\allowbreak 0$ the moment generating
function of the process in this case is symmetric consequently that
coefficients $c_{j}$ with odd numbers are equal to zero and moreover numbers 
$\chi _{n}\allowbreak =\allowbreak c_{n}/c_{2}$ satisfy the following
recursion:%
\begin{equation}
\chi _{2(k+1)}\allowbreak =\allowbreak \frac{c_{4}}{2c_{2}}\sum_{j=0}^{k-1}%
\binom{2k}{2j+1}\chi _{2(j+1)}\chi _{2(k-j)},  \label{rekursja}
\end{equation}%
which after denoting $T_{j}\allowbreak =\allowbreak \chi _{2j}(\frac{2}{\chi
_{4}})^{j-1}$ can be reduced to the following one: 
\begin{equation}
T_{k+1}\allowbreak =\allowbreak \sum_{j=1}^{k}\binom{2k}{2k-1}T_{j}T_{k-j+1}.
\label{TnaT}
\end{equation}%
Little reflections shows that numbers $T_{k}$ are the so called tangent
numbers\footnote{%
seq A000182 on http://oeis.org} which surprisingly come to the L\'{e}vy
processes scene.
\end{remark}

\begin{remark}
As a corollary we can now refer to the third martingale characterization of
the Wiener process done by Weso\l owski in \cite{Wes90}. It states that if a
square integrable process $\mathbf{X\allowbreak =\allowbreak (}X_{t})_{t\geq
0}$ has the property that $(X_{t},\mathcal{F}_{\leq t})$ and $(X_{t}^{2}-t,%
\mathcal{F}_{\leq t})$ are martingales and $(X_{t}/t,\mathcal{F}_{\geq t})$
and $((X_{t}^{2}-t)/t^{2},\mathcal{F}_{\geq t})$ are reversed martingales
then the process is a Wiener process. It was shown in \cite{Szab-OU-W} that
this is not true characterization. Namely a counterexample with dependent
increments was shown.

If we however we confine ourselves to the class of L\'{e}vy processes having
all moments then this characterization is true. Since as shown above for our
class of L\'{e}vy processes with $c_{1}\allowbreak =\allowbreak 0,$ $%
c_{2}\allowbreak =\allowbreak 1,$ $(X_{t},\mathcal{F}_{\leq t})$ and $%
(X_{t}^{2}-t,\mathcal{F}_{\leq t})$ are martingales and $(X_{t}/t,\mathcal{F}%
_{\geq t})$ is the reversed martingale only condition that $%
((X_{t}^{2}-t)/t^{2},\mathcal{F}_{\geq t})$ is a reversed martingale
matters. Comparing this requirement with Theorem \ref{first2} we see that we
must have $c_{4}\allowbreak =\allowbreak 0$ to fulfill the requirement. But $%
c_{4}\allowbreak =\allowbreak 0$ leads to $c_{i}\allowbreak =\allowbreak 0,$
for all $i\geq 3$ by Remark \ref{Kolmog},iii).
\end{remark}

\section{Open problems\label{open}}

First of all let us ask the following general question. Theorem \ref{first}
was proved under assumption that we deal with the L\'{e}vy process with all
moments existing. The proof of this result was simple because it strongly
depended on this assumption.

\begin{problem}
Can we weaken this assumption? That is can we prove assertions of Theorem %
\ref{first} assuming that say the L\'{e}vy process has only first $m$ ($m$
some fixed integer) moments? Can we prove harness property of $M_{1}$
assuming only existence of the first $m$ moments and say knowing that $%
E(X_{s}|\mathcal{F}_{\geq t})$ for $s<t$ is a linear function of $X_{t}?$
\end{problem}

Let us return the general 'reversed martingale' Problem \ref{gen_pr}.

The case $n\allowbreak =\allowbreak 2$ was examined in Theorem \ref{glowne}.

\begin{problem}
What about $n>2$ can we find such functions $\mu _{k}(t),$ $k\allowbreak
=\allowbreak 1,\ldots ,n$ that $R_{n}$ (defined by \ref{comb}) is the
reversed martingale?
\end{problem}

Similarly one can pose the following problem concerning the so called
quadratic harnesses among L\'{e}vy process the problem inclusively studied
recently by Bryc, Weso\l owski and Matysiak (see \cite{BRWE12},\cite{BryBo},%
\cite{BryMaWe07}).

\begin{problem}
Find all L\'{e}vy process (i.e. coefficients $c_{i},$ $i\geq 3)$ such that $%
M_{2}(X_{t},t)$ is a quadratic harness i.e. 
\begin{gather*}
E(M_{2}(X_{t},t)|\mathcal{F}_{s,u})\allowbreak =\allowbreak
AM_{2}(X_{s},s)+BM_{1}(X_{s},s)M_{1}(X_{u},u) \\
+CM_{2}(X_{u},u)+DM_{1}(X_{s},s)+EM_{1}(X_{u},u)-c_{2}s^{2},
\end{gather*}%
where $0<s<t<u,$ $A,$ $B$, $C,$ $D,$ $E$ are some functions of $s,t,u$ only.
Note that $A,$ $B$, $C,$ $D,$ $E$ can be relatively easily found by solving
system of $5$ linear equations obtained by multiplying the above equality by 
$M_{2}(X_{s},s),$ $M_{2}(X_{u},u),$ $M_{1}(X_{s},s)M_{1}(X_{u},u),$ $%
M_{1}(X_{s},s)$ and $M_{1}(X_{u},u)$ and calculating expectation of both
sides and utilizing the fact that $M_{i}(X_{t},t),$ $i=1,2$ are martingales
(as done in \cite{SzablPoly}). Having $A,$ $B$, $C,$ $D,$ $E$ we multiply
both sides of this equality by $M_{l}(X_{s},s)M_{k}(X_{u},u)$ and calculate
their expectations. On the way we use Lemma \ref{pom2}i)-ii) and Lemma \ref%
{pom},i). In this way we get system of recursions to be satisfied by
coefficients $c_{i},$ $i\geq 4.$
\end{problem}

\begin{problem}
What about extension of these results to processes with nonhomogeneous,
independent increments. A stem in this direction is done in \cite{SolUz08t}.
\end{problem}

\section{Proofs\label{dowody}}

\begin{proof}[Proof of Proposition \protect\ref{momenty}]
i) We have $m_{n}(s+t)\allowbreak =\allowbreak EX_{t+s}^{n}\allowbreak
=\allowbreak E(X_{t+s}-X_{s}+X_{s})^{n}\allowbreak =\allowbreak
\sum_{j=0}^{n}\binom{n}{j}m_{j}(t)m_{n-j}(s)$ since $E(X_{t+s}-X_{s})^{n}%
\allowbreak =\allowbreak m_{n}(t)$ for the L\'{e}vy processes.

ii) Let us define $Q(t;x)\allowbreak =\allowbreak \sum_{j\geq
0}m_{n}(t)x^{j}/j!$ . Following i) we get 
\begin{equation*}
Q(t+s;x)\allowbreak =\allowbreak Q(t;x)Q(s;x).
\end{equation*}%
Since for fixed $x$ function the $Q$ is continuous in the first argument by
assumption we are dealing with multiplicative Cauchy equation. Hence $%
Q(t;x)\allowbreak =\allowbreak \exp (tf(x))$ for some constant $f(x)$
depending on $x.$ Since $Q(t;x)$ is analytic with respect to $x$ and also
since $Q(t;0)\allowbreak =\allowbreak 1$ we can expand function $f$ in a
power series of the form $f(x)\allowbreak =\allowbreak \sum_{k\geq
1}c_{k}x^{k}/k!.$ Following definition of the function $Q$ we get further
statements of ii).

iii) We have by direct calculation: $m_{1}(t)\allowbreak =\allowbreak
EX_{t}\allowbreak =\allowbreak \left. \frac{\partial }{\partial x}\exp
(tf(x))\right\vert _{x=0}\allowbreak =\allowbreak c_{1}t$ and $%
m_{2}(t)\allowbreak =\allowbreak EX_{t}^{2}\allowbreak =\allowbreak \left. 
\frac{\partial ^{2}}{\partial x^{2}}\exp (tf(x))\right\vert
_{x=0}\allowbreak =\allowbreak c_{1}^{2}t\allowbreak +\allowbreak c_{2}t.$
Now let us consider sequence $\hat{m}_{n}(t).$ We have $\hat{m}%
_{n}(t)\allowbreak =\allowbreak \sum_{i=0}^{n}\binom{n}{i}%
m_{n-i}(t)(-1)^{i}\left( c_{1}t\right) ^{i}$ and also $\sum_{i\geq 0}\left(
-1\right) ^{i}(c_{1}t)^{i}\frac{x^{i}}{i!}\allowbreak =\allowbreak \exp
(-c_{1}tx).$ Hence $\sum_{j\geq 0}\hat{m}_{n}(t)\frac{x^{n}}{n!}\allowbreak
=\allowbreak \exp (tf(x)-c_{1}tx)).$

iv) First of all let us notice that following definition of the function $Q$
we have $m_{n}^{^{\prime }}(t)\allowbreak =\allowbreak \left. \frac{\partial
^{n}\partial Q(t;x)}{\partial x^{n}\partial t}\right\vert _{x=0}\allowbreak
=\allowbreak \left. \frac{\partial ^{n}}{\partial x^{n}}(f(x)\exp
(tf(x))\right\vert _{x=0}.$ Now we apply Leibnitz formula for $n-$th
derivative of the product of two differentiable functions. On the way we
have to remember that $\left. \frac{d^{n}}{dx_{n}}f(x)\right\vert
_{x=0}\allowbreak =\allowbreak c_{n}.$

v) We have: 
\begin{gather*}
\sum_{n=0}^{\infty }\frac{u^{n}}{n!}\sum_{j=0}^{n}\binom{n}{j}%
m_{n-j}(-s)m_{j+i}(s)\allowbreak =\allowbreak \\
\exp (-sf(x))\sum_{j=0}^{\infty }\frac{u^{j}}{j!}m_{j+i}(s)\allowbreak
=\allowbreak \exp (-sf(u))\frac{\partial ^{i}}{\partial u^{i}}\exp (sf(u)).
\end{gather*}
\end{proof}

\begin{proof}[Proof of Lemma \protect\ref{pom2}]
i) First observe that $M_{1}(x,t)\allowbreak \mathcal{N}_{t}(x,r)=%
\allowbreak (x-c_{1}t)\mathcal{N}_{t}(x,r)\allowbreak =\allowbreak \frac{%
\partial }{\partial r}\mathcal{N}_{t}(x,r)+t(f^{\prime }(r)-c_{1})\mathcal{N}%
_{t}(x,r),$ where $\mathcal{N}_{t}(x,r)$ is the defined in Proposition \ref%
{mart}, ii) characteristic functions of polynomials $M_{n}.$ Hence using
Leibnitz's rule we get: 
\begin{gather*}
M_{1}(x,t)M_{n}(x,t)\allowbreak =\allowbreak \left. \frac{\partial ^{n}}{%
\partial r^{n}}M_{1}(x,t)\allowbreak \mathcal{N}_{t}(x,r)\right\vert _{r=0}
\\
=\allowbreak \left. \frac{\partial ^{n+1}}{\partial r^{n+1}}\mathcal{N}%
_{t}(x,r)\right\vert _{r=0}+t\left. \sum_{j=0}^{n}\binom{n}{j}\frac{\partial
^{j}}{\partial r^{j}}(f^{\prime }(r)-c_{1})\frac{\partial ^{n-j}}{\partial
r^{n-j}}\mathcal{N}_{t}(x,r)\right\vert _{r=0} \\
=M_{n+1}(x,t)+t\sum_{j=0}^{n}\binom{n}{j}c_{j+1}M_{n-j}(x,t)\allowbreak ,
\end{gather*}%
since obviously $\left. \frac{\partial ^{k}}{\partial r^{k}}\mathcal{N}%
_{t}(x,r)\right\vert _{r=0}\allowbreak =\allowbreak M_{k}(x,t).$

ii) Recall that $M_{2}(x,t)\allowbreak =\allowbreak M_{1}(x,t)^{2}-c_{2}t,$
hence using i) we get 
\begin{gather*}
M_{1}(x,t)^{2}M_{n}(x,t)=M_{1}(x,t)M_{n+1}(x,t)+t\sum_{k=1}^{n}\binom{n}{k}%
c_{k+1}M_{n-k}(x,t)M_{1}(x,t) \\
=M_{n+2}(x,t)+t\sum_{k=1}^{n+1}\binom{n+1}{k}c_{k+1}M_{n+1-k}(x,t)+t%
\sum_{k=1}^{n}\binom{n}{k}c_{k+1}M_{n-k+1}(x,t)+ \\
t^{2}\sum_{k=1}^{n-1}\binom{n}{k}c_{k+1}\sum_{j=1}^{n-k}\binom{n-k}{j}%
c_{j+1}M_{n-k-j}(x,t) \\
=M_{n+2}(x,t)+t\sum_{k=1}^{n+1}(\binom{n}{k-1}+2\binom{n}{k}%
)c_{k+1}M_{n-k+1}(x,t)+ \\
t^{2}\sum_{l=2}^{n}\binom{n}{l}M_{n-l}(x,t)\sum_{k=1}^{l-1}\binom{l}{k}%
c_{k+1}c_{l-k+1}
\end{gather*}%
Since $\binom{n+1}{k}\allowbreak =\allowbreak \binom{n}{k}\allowbreak
+\allowbreak \binom{n}{k-1}.$

iii) Notice that on one hand $EM_{n}(X_{t},t)M_{k}(X_{t},t)$ that is equal
to $E\left. \frac{\partial ^{n}\partial ^{k}}{\partial u^{n}\partial v^{k}}%
\mathcal{N}_{t}(X_{t},u)\mathcal{N}_{t}(X_{t},v)\right\vert
_{u=v=0}\allowbreak =\allowbreak $ $\left. \frac{\partial ^{n}\partial ^{k}}{%
\partial u^{n}\partial v^{k}}E\mathcal{N}_{t}(X_{t},u)\mathcal{N}%
_{t}(X_{t},v)\right\vert _{u=v=0}\allowbreak .\allowbreak $Now notice that $E%
\mathcal{N}_{t}(X_{t},u)\mathcal{N}_{t}(X_{t},v)\allowbreak =\allowbreak
E\exp ((u+v)X_{t}-t(f(u)+f(v))\allowbreak =\allowbreak \exp
(t(f(u+v)-f(u)-f(v))$ by Proposition \ref{mart}, ii). Notice that $\left. 
\frac{\partial ^{k}}{\partial v^{k}}\exp (t(f(u+v)-f(u)-f(v))\right\vert
_{u=0}\allowbreak =\allowbreak 0$ for $k\geq 1.$ Further notice that $\frac{%
\partial ^{k}}{\partial v^{k}}\exp (t(f(u+v)-f(u)-f(v))$ is a product of two
expressions : first being a polynomial in $t$ of order $k$ with coefficients
being some differential expressions of $f(u+v)-f(v)$ and the second $\exp
(t(f(u+v)-f(u)-f(v)).$ Consequently upon applying Leibnitz rule to this
product and setting $u=v=0$ we see that only the first expression matters.
The assertion follows the fact that $\left. \frac{\partial ^{n}}{\partial
u^{n}}(f^{(j)}(u+v)-f^{(j)}(v))\right\vert _{u=v=0}\allowbreak =\allowbreak
\left. \frac{\partial ^{n}}{\partial u^{n}}(\left.
f^{(j)}(u+v)-f^{(j)}(v)\right\vert _{v=0})\right\vert _{u=0}$ for $%
j\allowbreak =\allowbreak 1,\ldots ,k.$ Firstly we observe that $n-$th
derivative of $\exp (tf(x))$ with respect to $x$ is of the form $(tf^{\left(
n\right) }(x)\allowbreak +\allowbreak \ldots \allowbreak +\allowbreak
t^{n}(f^{\prime }(x))^{n})\exp (tf(x)).$ The independence of $c_{1}$ follows
the fact that $\exp (t(f(u+v)-f(u)-f(v))\allowbreak =\allowbreak \exp
(t(f(u+v)-c_{1}(u+v)-(f(u)-c_{1}u)-(f(v)-c_{1}v)))$ hence does not depend on 
$c_{1}.$ Thus visibly $EM_{n}(X_{t},t)M_{k}(M_{t},t)$ is a polynomial in $t$
of order $\min (n,k)$ with coefficient by $t^{j}$ equal to $\left. \frac{%
d^{n+k-j}}{dx^{n+k-j}}\left( f^{\prime }(x)\right) ^{j}\right\vert _{x=0}$
for $j\allowbreak =\allowbreak 0,\ldots ,\min (n,k).$
\end{proof}

\begin{proof}[Proof of Theorem \protect\ref{first}]
To see that $M_{1}(X_{t},t)/t$ is a reversed martingale we have to show that
for all $s<t$ and $l$ we have :%
\begin{equation*}
\frac{1}{s}EM_{1}(X_{s},s)M_{l}(X_{s},s)\allowbreak =\allowbreak \frac{1}{t}%
EM_{1}(X_{t},t)M_{l}(X_{t},t).
\end{equation*}%
By Lemma \ref{pom2},i) we see that this is satisfied.

To prove the 'harness' part we have to show for example that for all $r\in 
\mathbb{R}$. 
\begin{eqnarray*}
E\mathbb{\mathcal{N}}_{s}(X_{s},r)M_{1}(X_{t},t)\mathcal{N}%
_{u}(X_{u},r)\allowbreak &=&\allowbreak \frac{u-t}{u-s}E\mathcal{N}%
_{s}(X_{s},r)M_{1}(X_{s},s)\mathcal{N}_{u}(X_{u},r) \\
&&+\frac{t-s}{u-s}E\mathcal{N}_{s}(X_{s},r)M_{1}(X_{u},u)\mathcal{N}%
_{u}(X_{u},r).
\end{eqnarray*}%
Recall that $\mathcal{N}_{t}(x,r)\allowbreak =\allowbreak \exp (tx-tf(r))$ .
Utilizing martingale property of $\mathcal{N}_{t}(X_{t},r)$ we get: \newline
\begin{gather*}
E\mathbb{\mathcal{N}}_{s}(X_{s},r)M_{1}(X_{t},t)\mathcal{N}%
_{u}(X_{u},r)\allowbreak =\allowbreak \allowbreak E\mathbb{\mathcal{N}}%
_{s}(X_{s},r)M_{1}(X_{t},t)\mathcal{N}_{t}(X_{t},r)\allowbreak \allowbreak \\
=\allowbreak \frac{\partial }{\partial r}E\left( \mathbb{\mathcal{N}}%
_{s}(X_{s},r)\right) ^{2}+t(f^{\prime }(r)-c_{1})E\left( \mathbb{\mathcal{N}}%
_{s}(X_{s},r)\right) ^{2}.
\end{gather*}%
By the similar argument we have: 
\begin{equation*}
E\mathcal{N}_{s}(X_{s},r)M_{1}(X_{s},s)\mathcal{N}_{u}(X_{u},r)\allowbreak
=\allowbreak \frac{\partial }{\partial r}E\left( \mathbb{\mathcal{N}}%
_{s}(X_{s},r)\right) ^{2}+s(f^{\prime }(r)-c_{1})E\left( \mathbb{\mathcal{N}}%
_{s}(X_{s},r)\right) ^{2}
\end{equation*}%
and 
\begin{equation*}
E\mathcal{N}_{s}(X_{s},r)M_{1}(X_{u},u)\mathcal{N}_{u}(X_{u},r)\allowbreak
=\allowbreak \frac{\partial }{\partial r}E\left( \mathbb{\mathcal{N}}%
_{s}(X_{s},r)\right) ^{2}+u(f^{\prime }(r)-c_{1})E\left( \mathbb{\mathcal{N}}%
_{s}(X_{s},r)\right) ^{2}.
\end{equation*}%
The desired equality follows since $\frac{u-t}{u-s}\allowbreak +\allowbreak 
\frac{t-s}{u-s}\allowbreak =\allowbreak 1$ and $t\allowbreak =\allowbreak 
\frac{u-t}{u-s}s+u\frac{t-s}{u-s}.$
\end{proof}

\begin{proof}[Proof of Lemma \protect\ref{pom}]
First of all notice that if $\mu (t)M_{k}(t)$ is a reversed martingale then $%
E(\mu (s)M_{k}(X_{s},s)|\mathcal{F}_{\geq t})\allowbreak =\allowbreak \mu
(t)M_{k}(X_{t},t)$ a.s., hence multiplying both sides by $M_{l}(X_{t},t)$
and taking expectation of both sides we get $\mu
(s)EM_{k}(X_{s},s)M_{l}(X_{t},t)\allowbreak =\allowbreak \mu
(t)EM_{k}(X_{t},t)M_{l}(X_{t},t).$ Finally we use the fact that $M_{l}$ is a
martingale. Thus we get (\ref{war}). By Lemma \ref{pom2},iii) we know that $%
EM_{k}(X_{t},t)M_{l}(X_{t},t)$ is a polynomial of order $\min (k,l)$ in $t$.
Moreover if $l\allowbreak =\allowbreak k$ coefficient by $t^{k}$ is equal to 
$k!c_{2}^{k}>0.$ Secondly notice that quantity $\mu
(t)EM_{k}(X_{t},t)M_{l}(X_{t},t)$ has to be independent on $t,$ thus since
for $l\allowbreak =\allowbreak k$ $EM_{k}(X_{t},t)M_{l}(X_{t},t)$ is a
polynomial in $t$ of exactly $k-$th order we deduce that $\mu (t)$ must be
proportional to the inverse of $EM_{k}(X_{t},t)M_{k}(X_{t},t).$

i) By Lemma \ref{pom2},iii) we know that for $l<k$ $%
EM_{k}(X_{t},t)M_{l}(X_{t},t)$ is a polynomial in $t$ of order $l,$ so if $%
\mu (t)EM_{k}(X_{t},t)M_{l}(X_{t},t)$ is to be independent of $t$ $%
EM_{k}(X_{t},t)M_{l}(X_{t},t)$ must be zero polynomial.

ii) The fact that $c_{k+l}\allowbreak =\allowbreak 0,$ $l\allowbreak
=\allowbreak 1,\ldots ,k-1$ follows formula $d_{1}^{(k,l)}\allowbreak
=\allowbreak c_{k+l}$ and the fact that $EM_{k}(X_{t},t)M_{l}(X_{t},t)$ for $%
l<k$ must be zero polynomial in particular its coefficients by $t$ (which
are equal to $c_{k+l})$ must be equal to zero. In this way we get the case $%
k\allowbreak =\allowbreak 2.$ Let us now consider coefficient in $%
EM_{k}(X_{t},t)M_{l}(X_{t},t)$ by $t^{2}.$ It is equal to $\sum_{j=1}^{l+k-3}%
\binom{l+k-2}{j}c_{j+1}c_{l+k-j-1}$ as indicated by Lemma \ref{pom2},ii).
Let us now take into account the fact that $c_{k+1},\ldots ,c_{2k-1}$ are
equal to zero. It means that in fact we have to have: $\sum_{j=l-1}^{k-1}%
\binom{l+k-2}{j}c_{j+1}c_{l+k-j-1}\allowbreak =\allowbreak 0.$ Now we change
index of summation to $s\allowbreak =\allowbreak j-l+1$ and get that for all 
$l\allowbreak =\allowbreak 2,\ldots ,k-1$ we have to have $\sum_{s=0}^{k-l}%
\binom{k+l-2}{s+l-1}c_{s+l}c_{k-s}\allowbreak =\allowbreak 0.$ Let us
consider $l\allowbreak =\allowbreak k-1$ and $k-2.$ From the first equality
we deduce that $c_{k}c_{k-1}\allowbreak =\allowbreak 0$ and from the second
that $(\binom{2k-4}{k-3}\allowbreak +\binom{2k-4}{k-1})\allowbreak
c_{k-2}c_{k}\allowbreak +\allowbreak \binom{2k-4}{k-2}c_{k-1}^{2}\allowbreak
=\allowbreak 0.$ Now if $k\allowbreak =\allowbreak 3$ and $c_{2}>0$ we
deduce that $c_{3}\allowbreak =\allowbreak 0$ when $k\allowbreak
=\allowbreak 3.$ Thus let us take $k\geq 4.$ By multiplying both sides of
the last equality by $c_{k-1}$ we deduce that since $c_{k-1}c_{k}\allowbreak
=\allowbreak 0$ that $c_{k-1}\allowbreak =\allowbreak 0,$ or equivalently
that $c_{k-2}c_{k}\allowbreak =\allowbreak 0$. Let us consider now $%
l\allowbreak =\allowbreak k-3.$ We get $((\binom{2k-5}{k-4}\allowbreak
+\allowbreak \binom{2k-5}{k-1})c_{k-3}c_{k}\allowbreak +\allowbreak (\binom{%
2k-5}{k-3}\allowbreak \allowbreak +\allowbreak \binom{2k-5}{k-2}%
)c_{k-2}c_{k-1}\allowbreak =\allowbreak 0.$ Hence $c_{k}c_{k-3}\allowbreak
=\allowbreak 0$ and so on. But after $k-2$ such steps we will get $%
c_{2}c_{k}\allowbreak =\allowbreak 0.$ But $c_{2}>\allowbreak 0.$ So we
deduce that $c_{k}\allowbreak =\allowbreak 0.$
\end{proof}

\begin{proof}[Proof of Theorem \protect\ref{glowne}]
First of all let us notice that condition (\ref{og_r_m}) for $l\allowbreak
=\allowbreak 1,2$ leads to the following two linear equations:%
\begin{eqnarray*}
\mu _{1}EM_{1}^{2}(X_{t}.t)+\mu _{2}EM_{1}(X_{t},t)M_{2}(X_{t},t)\allowbreak
&=&\allowbreak \beta , \\
\mu _{1}EM_{1}(X_{t}.t)M_{2}(X_{t},t)+\mu _{2}EM_{2}^{2}(X_{t},t)\allowbreak
&=&\allowbreak 1
\end{eqnarray*}%
for the functions $\mu _{1}$ and $\mu _{2}.$ Hence indeed they are given by (%
\ref{_m1}) and (\ref{_m2}). Now notice that in order that say right hand
side of (\ref{og_r_m}) be independent on $t$ for $l\geq 3$ we have to have:%
\begin{eqnarray*}
-2\beta c_{2}^{2}c_{l+1}+(\beta c_{3}-c_{2})\sum_{k=1}^{l-1}\binom{l}{k}%
c_{k+1}c_{n+1-k})) &=&2A_{l}c_{2}^{3}, \\
(c_{3}-\beta c_{4})c_{l+1}+(-c_{2}+\beta c_{3})c_{l+2}
&=&A_{l}(c_{2}c_{4}-c_{3}^{2}),
\end{eqnarray*}%
for some constant $A_{l}.$ Let us denoting $\chi _{l}\allowbreak
=\allowbreak c_{l}/c_{2}$ eliminate $A_{l}$ from the above equations. We
will get then:%
\begin{equation*}
(\chi _{4}-\chi _{3}^{2})(-\beta \chi _{l+1}+\frac{(\beta \chi _{3}-1)}{2}%
\sum_{k=1}^{l-1}\binom{l}{k}\chi _{k+1}\chi _{n+1-k})=\chi _{l+2}(\beta \chi
_{3}-1)+\chi _{l+1}(\chi _{3}-\beta \chi _{4}).
\end{equation*}%
This equation is equivalent to the following relationship:%
\begin{equation*}
(1-\beta \chi _{3})(\chi _{l+2}-\chi _{l+1}\chi _{3}-\frac{(\chi _{4}-\chi
_{3}^{2})}{2}\sum_{k=1}^{l-1}\binom{l}{k}\chi _{k+1}\chi _{l+1-k})=0
\end{equation*}%
$(1-\beta \chi _{3})=0$ leads to $\mu _{2}\allowbreak =\allowbreak 0$ so let
us assume that%
\begin{equation*}
\chi _{l+2}\allowbreak =\allowbreak \chi _{3}\chi _{l+1}+\frac{(\chi
_{4}-\chi _{3}^{2})}{2}\sum_{k=1}^{l-1}\binom{l}{k}\chi _{k+1}\chi _{l+1-k}.
\end{equation*}%
Let us denote $\varphi (r)\allowbreak =\allowbreak \sum_{n=2}^{\infty }\frac{%
r^{n-2}}{(n-2)^{2}}\chi _{n}$. Comparing this definition with (\ref{kolmog})
we see that $\varphi (r)\allowbreak =\allowbreak f^{\prime \prime }(r)/c_{2}.
$ Notice that $\varphi \left( 0\right) \allowbreak =\allowbreak 1.$
Multiplying both sides by $\frac{r^{l-1}}{(l-1)!}$ and sum by $l$ from $1$
to $\infty .$ 
\begin{eqnarray*}
\varphi ^{\prime }(r)\allowbreak  &=&\allowbreak \chi _{3}\varphi (r)+\frac{%
(\chi _{4}-\chi _{3}^{2})}{2}\sum_{l=2}^{\infty }\frac{r^{l-1}}{(l-1)!}%
\sum_{k=1}^{l-1}\binom{l}{k}\chi _{k+1}\chi _{l+1-k} \\
&=&\chi _{3}\varphi (r)+\frac{(\chi _{4}-\chi _{3}^{2})}{2}%
\sum_{k=1}^{\infty }\frac{r^{k-1}}{k!}\chi _{k+1}\sum_{l=k+1}^{\infty }\frac{%
lr^{l-k}}{(l-k)!}\chi _{l+1-k} \\
&=&\chi _{3}\varphi (r)+\frac{(\chi _{4}-\chi _{3}^{2})}{2}%
\sum_{k=1}^{\infty }\frac{r^{k-1}}{k!}\chi _{k+1}\sum_{m=1}^{\infty }\frac{%
(k+m)r^{m}}{m!}\chi _{m+1} \\
&=&\chi _{3}\varphi (r)+\frac{(\chi _{4}-\chi _{3}^{2})}{2}%
\sum_{k=1}^{\infty }\frac{r^{k-1}}{k!}\chi _{k+1}(k\sum_{m=1}^{\infty }\frac{%
r^{m}}{m!}\chi _{m+1}+\sum_{m=1}^{\infty }\frac{r^{m}}{(m-1)!}\chi _{m+1}) \\
&=&\chi _{3}\varphi (r)+\frac{(\chi _{4}-\chi _{3}^{2})}{2}2\varphi
(r)\int_{0}^{r}\varphi (x)dx.
\end{eqnarray*}%
So we have end up with the following differential equation:%
\begin{equation}
\psi ^{\prime \prime }(r)-\chi _{3}\psi ^{\prime }(r)-v\psi ^{\prime
}(r)\psi (r)=0  \label{diff}
\end{equation}%
where we denoted $\psi (r)\allowbreak =\allowbreak \int_{0}^{r}\varphi
(x)dx\allowbreak =\allowbreak f^{\prime }(r)/c_{2},$ $v\allowbreak
=\allowbreak (\chi _{4}-\chi _{3}^{2})$ with initial conditions $\psi
(0)\allowbreak =\allowbreak 0,$ $\psi ^{\prime }(0)\allowbreak =\allowbreak
1.$ Before solving this equation in full generality let us consider
particular cases.

\begin{enumerate}
\item Let us assume that $v>0$ and $\chi _{3}\allowbreak =\allowbreak 0.$
Our equation now becomes 
\begin{equation*}
\psi ^{\prime \prime }(r)-v\psi ^{\prime }(r)\psi (r)=0,
\end{equation*}%
which leads to $\psi (r)=\sqrt{\frac{2C_{1}}{v}}\tan (\sqrt{\frac{C_{1}}{2v}}%
(r+C_{2})).$ Taking into account initial conditions we get $\psi (r)=\sqrt{%
\frac{2}{\chi _{4}}}\tan (\frac{r}{\sqrt{2\chi _{4}}})$ and consequently
recalling that $\int \tan (ax)dx\allowbreak =\allowbreak -\log \cos (ax)/a$
we get (\ref{momf}).

\item $v\allowbreak =\allowbreak 0$ or $c_{4}/c_{2}\allowbreak =\allowbreak
c_{3}^{2}/c_{2}^{2}$ which means that (recalling interpretation of
coefficients $c_{n}$ presented in Remark \ref{Kolmog}) variance of the L\'{e}%
vy measure of our process is equal to zero consequently that L\'{e}vy is
degenerated and concentrated at the point $c_{3}/c_{2}.$ In this case
equation (\ref{diff}) is reduced to the following: $\psi ^{\prime \prime
}(r)-\chi _{3}\psi ^{\prime }(r)=0$ which gives (after taking into account
initial conditions) $\psi (r)\allowbreak =\allowbreak \exp (r\chi _{3})/\chi
_{3}.$ Hence we get the assertion.

\item If $v\allowbreak =\allowbreak \chi _{3}^{2}/2$ then one can easily
check that the following function: 
\begin{equation*}
\psi (x)=\frac{x}{1-\chi _{3}x/2}
\end{equation*}%
satisfies conditions $\psi (0)=0$ and $\psi ^{\prime }(0)\allowbreak
=\allowbreak 1$ and moreover satisfies differential equation (\ref{diff})
with $v\allowbreak =\chi _{3}^{2}/2.$ Hence $f(x)\allowbreak =\allowbreak
c_{1}x+\frac{-2xc_{2}}{c_{3}}-4c_{2}^{2}\ln (1-xc_{3}/(2c_{2}))/c_{3}^{2}$

\item If $2v>\chi _{3}^{2},$ then by solving (\ref{diff}) and then imposing
initial conditions we get 
\begin{equation*}
\psi (x)=\frac{2\sin r\alpha }{2\alpha \cos r\alpha -\chi _{3}\sin r\alpha },
\end{equation*}%
where we denoted $\alpha \allowbreak =\allowbreak \frac{1}{2}\sqrt{2v-\chi
_{3}^{2}}\allowbreak =\allowbreak \frac{1}{2}\sqrt{2\chi _{4}-3\chi _{3}^{2}}%
.$ Thus $f(x)\allowbreak =\allowbreak x(c_{1}-\frac{\chi _{3}}{2\alpha
^{2}+\chi _{3}^{2}/2})\allowbreak \allowbreak +\allowbreak \frac{1}{v}(2%
\func{arctanh}(\frac{\chi _{3}}{2\alpha }\tan (x\alpha ))-\ln (2\alpha
^{2}-\chi _{3}^{2}/2+(2\alpha ^{2}+\chi _{3}^{2}/2)\cos 2x\alpha ).$ Now
recall that $\func{arctanh}x\allowbreak =\allowbreak \frac{1}{2}\ln \frac{1+x%
}{1-x}$ and we get (\ref{og1}) and (\ref{og2})

\item If $2v<\chi _{3}^{2},$ then we argue in the same way as in the
previous case but in this case parameter $\alpha $ is imaginary and we get
hyperbolic functions.
\end{enumerate}
\end{proof}

\end{document}